%%%%%%%%%%%%%%%%%%%%%%% file template.tex %%%%%%%%%%%%%%%%%%%%%%%%%
%
% This is a general template file for the LaTeX package SVJour3
% for Springer journals.          Springer Heidelberg 2010/09/16
%
% Copy it to a new file with a new name and use it as the basis
% for your article. Delete % signs as needed.
% This template includes a few options for different layouts and
% content for various journals. Please consult a previous issue of
% your journal as needed.
%
%%%%%%%%%%%%%%%%%%%%%%%%%%%%%%%%%%%%%%%%%%%%%%%%%%%%%%%%%%%%%%%%%%%
%
% First comes an example EPS file -- just ignore it and
% proceed on the \documentclass line
% your LaTeX will extract the file if required
% [arxiv_v2: filecontents example.eps stripped, 191 chars]
\RequirePackage{fix-cm}
\documentclass[smallextended,envcountsect]{svjour3}       % onecolumn (second format)
\smartqed  % flush right qed marks, e.g. at end of proof
\usepackage{tikz}
\colorlet{mygreen}{green!60!black}
\newcommand{\mygreen}[1]{{\color{mygreen}#1}}
\usepackage{graphicx}
\usepackage{bigints}
\usepackage{amssymb}
\usepackage{verbatim}
\usepackage{subfigure}
\usepackage{booktabs}
\usepackage{booktabs}
\usepackage{amsfonts,amsmath}
\usepackage{capt-of}% or \usepackage{caption}
\usepackage{hyperref}
\hypersetup{
	colorlinks=true,
	linkcolor=blue,
	filecolor=blue,      
	urlcolor=blue,
	citecolor=blue
}
\usepackage{cite}
\newtheorem{assumption}{Assumption}[section]

%\newtheorem{algorithm}{Algorithm}[section]

%
% \usepackage{mathptmx}      % use Times fonts if available on your TeX system
%
% insert here the call for the packages your document requires
%\usepackage{latexsym}
% etc.
%
% please place your own definitions here and don't use \def but
% \newcommand{}{}
%
% Insert the name of "your journal" with
% \journalname{Journal of Global Optimization}
%
\begin{document}
	
	\title{Regularized neural network for general variational inequalities involving monotone couples of operators in Hilbert spaces}
	%\subtitle{Do you have a subtitle?\\ If so, write it here}
	
	\titlerunning{Regularized neural networks for monotone GVIs}        % if too long for running head
	
	\author{Pham Ky Anh         \and
		Trinh Ngoc Hai\thanks{Dedicated to Professor Hoang Xuan Phu on the occasion of his 70th birthday.}  \and 
            Nguyen Van Manh
        	}
	
	%\authorrunning{Short form of author list} % if too long for running head
	
	\institute{Pham Ky Anh \at
		Department of Mathematics, Vietnam National University, 334 Nguyen Trai, Thanh Xuan, Hanoi, Vietnam\\
		\email{anhpk\symbol{64}vnu.edu.vn}           %  \\
		%             \emph{Present address:} of F. Author  %  if needed
		\and
		Trinh Ngoc Hai \at
		School of Applied Mathematics and Informatics,
		Hanoi University of Science and Technology, 1 Dai Co Viet, Hai Ba Trung, Hanoi, Vietnam\\
		\email{hai.trinhngoc\symbol{64}hust.edu.vn}
        \and
		Nguyen Van Manh \at
            Faculty of Basic Sciences, Hanoi University of Industry, 298 Cau Dien, Bac Tu Liem, 100000, Hanoi, Vietnam\\
		\email{nvmanhhhn\symbol{64}haui.edu.vn}
	}
	
	\date{Received: date / Accepted: date}
	% The correct dates will be entered by the editor

	\maketitle
	
	\begin{abstract} 
	In this paper, based on the Tikhonov regularization technique, we study a monotone general variational inequality (GVI)  by considering an associated strongly monotone GVI, depending on a regularization parameter $\alpha,$ such that the latter admits a unique solution $x_\alpha$ which tends to some solution of the initial GVI, as $\alpha \to 0.$ However, instead of solving  the regularized GVI for each $\alpha$,  which may be very expensive, we consider a neural network (also known as a dynamical system) associated with the regularized GVI and establish the existence and the uniqueness of the strong global solution to the corresponding Cauchy problem. An explicit discretization of  this neural network leads to strongly convergent iterative regularization algorithms for monotone general variational inequality. Numerical tests are performed to show the effectiveness of the proposed methods.\\
    This work extends our recent results in  [Anh, Hai, Optim. Eng. 25 (2024) 2295-2313] to more general setting.
		\keywords{General variational inequality \and Bilevel variational inequality \and Regularized neural network \and Monotonicity \and Monotone couple \and Lipschitz continuity}
		% \PACS{PACS code1 \and PACS code2 \and more}
		\subclass{47J20 \and 49J40 \and 49M30}  
	\end{abstract}
	\section{Introduction and Preliminaries}
Let $H$ be a real Hilbert space, whose inner product and norm are
denoted by $\langle \cdot, \cdot \rangle $ and $\| \cdot\| $, respectively. Let $C \subset H$ be a nonempty, closed and convex set and $ A; B; F; G : H \to H$
be given operators. \\
The  variational  inequality of $G$ on $C$ is
	\begin{equation}\label{VI}
		\text{to find } x^* \in C \text{ such that }   \left\langle G x^*, y-  x^*  \right\rangle \geq 0 \quad \forall y \in C. \tag{VI($G,C$)}
	\end{equation}
The inverse variational  inequality of $B$ on $C$ is
	\begin{equation}\label{IVI}
		\text{to find } x^* \in H \text{ such that } Bx^* \in C \text{ and } \left\langle  x^*, y- B x^*  \right\rangle \geq 0 \quad \forall y \in C. \tag{IVI($B,C$)}
	\end{equation}
In this paper, we focus our attention on solving the following General Variational Inequality  (GVI, for short):
\begin{equation}\label{GVI}
{\rm Find}\; x^* \in H \;  {\rm such \;  that} \; Fx^* \in C \; {\rm and } \;\langle Ax^*, y - Fx^*\rangle  \geq 0 \quad  \; \forall y \in C.\;\; \tag{GVI($A, F, C$)}    
\end{equation}
Denote by Sol($A, F, C$) the solution set of GVI($A, F, C$), which is assumed not empty
throughout this paper.\\
A couple $(A, F)$ is said to be, see, \cite{LiZou},
\begin{itemize}
\item[] $\gamma$-strongly monotone, $\gamma >0$, if 
$$ \langle Ax - Ay, Fx - Fy \rangle \geq \gamma \|x - y\|^2, \; \forall x, y \in H; $$
\item[] monotone, if 
$$ \langle Ax - Ay, Fx - Fy \rangle \geq 0, \; \forall x, y \in H. $$
\end{itemize}
A general variational inequality GVI($A, F, C$), involving a monotone (strongly monotone) couple ($A, F$), is called a monotone (strongly monotone) GVI.\\
\indent In recent years there has been a growing interest in the general variational inequality driven by its large number of applications. When $F=I$-the identity operator, the GVI collapses into a variational inequality VI($A,C$) and if $A=I,$ we get the inverse variational inequality IVI($F, C$). Thus, the general variational inequality provides a unified formulation for several important problems. In what follows, only the directly involved literature on GVIs will be cited here. The general variational inequality was first introduced and studied by Noor \cite{Noor1}. Pang and Yao \cite{PangYao} established some sufficient conditions for the existence of the solutions to a GVI and investigated their stability properties. Zhao \cite{Zhao} and He \cite{He1} proposed some implicit methods for monotone general variational inequalities in finite dimensional Hilbert spaces. Noor et al \cite{AlShejari}, \cite{Noor1}, and Tangkhawiwetkul \cite{Tang} proposed some neural networks as well as their discretized versions for solving strongly monotone GVI. Based on the Banach contraction principle, they established strong convergence of iterative methods under very restrictive conditions. \\
In this paper, we combine the Tikhonv regularization technique and the dynamical system approach to construct a class of iterative regularization methods for solving monotone general variational inequalities. This distinguishes our methods from most other projection-type methods for general variational inequalities with monotone/strongly monotone couples of operators.\\
The paper is organized as follows. In Section 1, we briefly state some definitions and lemmas which will be subsequently used. In Section 2, the existence and uniqueness of the solution to strongly monotone GVIs is established under some mild conditions. In Section 3, a regularized neural network associated with a unique solution to the strongly monotone GVI depending on the regularization parameter $\alpha,$ is studied. It is proved that the corresponding Cauchy problem admits a unique global solution $x(t),$ whose limit at infinity exists and solves the given monotone GVI.  
In Section 4, we derive a class of explicit iterative regularization  methods by discretizing the mentioned above Cauchy problem. Finally in Section 5, several examples, including an infinite dimensional one, are given to demonstrate the applicability of our result.\\
Now, recall some notions for subsequent needs.\\
An operator $A:C\to H$ is said to be
\begin{enumerate}
			\item $\lambda$-strongly monotone on $C$, if there exists a constant $\lambda >0$ such that for all $x,y\in C$, we have
			$$ \langle Ax-Ay, x-y \rangle \geq   \lambda \|x-y\|^2.$$
			\item monotone on $C$, if  for all $x,y\in C$, it  holds that
			$$ \langle Ax - Ay,x-y \rangle \geq 0.$$
            \item firmly nonexpansive, if for all $x,y\in C$, one gets
            $$ \langle Ax - Ay,x-y \rangle \geq \| Ax - Ay \|^2 $$
			\item $L_A$-Lipschitz continuous on  $C$, if  for all $x,y\in C$, it  holds that
			$$ \| Ax-Ay\| \leq L_A \| x-y\|.   $$
			\item sequentially weak-to-weak continuous on $C$ if for any sequence $\{x^k\}$ satisfying $x^k \rightharpoonup \bar x$, it holds that $Ax^k \rightharpoonup A\bar x$.
\end{enumerate}
It is well known that the metric projection of $x\in H$ onto $C,$ defined by $Px:= \underset{y \in C} {\rm argmin}\| x- y\|$ enjoys two important properties
\begin{itemize}
\item[i)] $z=Px$ if and only if 
		$$
		\left\langle x-z,y-z \right\rangle   \leq 0 \quad \forall y \in C;
		$$ 
\item[ii)] $\langle Px - Py, x-y \rangle \geq \|Px - Py\|^2, \; \forall x, y \in H.$
\end{itemize}
The second property means that $P$ is firmly nonexpansive, hence, it is monotone and nonexpansive.
\vskip0.1cm
\indent We start with the following generalization of \cite[Lemma 2.2]{AnhHai}.
	\begin{lemma}\label{lemma1}
		\begin{itemize}
			\item[\textup{(a)}] Assume that the couple $(A,F)$ is $\gamma$-strongly monotone and $x^*$ is a solution of \ref{GVI}. Then, for all $f\in C, y\in H$, we have
			\begin{equation}\label{3a}
				\left\langle Fy-  f, Ay-Ax^*  \right\rangle +\left\langle  f-Fx^*, Ay  \right\rangle  \geq \gamma \| y-x^*  \|^2 .
			\end{equation}	
			\item[\textup{(b)}] Assume that $(A,F)$ is monotone. Then, $x^*$ is a solution of \ref{GVI} if and only if 
			\begin{equation}\label{1a}
				Fx^* \in C \textup{ and } \left \langle  Ay-Ax^*, Fy -f  \right \rangle + \left \langle  Ay, f-Fx^*  \right \rangle \geq 0 \quad \forall y\in H,  f\in C.
			\end{equation}		
		\end{itemize}
	\end{lemma}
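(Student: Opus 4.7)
My plan is to prove both parts by a single algebraic regrouping of the left-hand side, together with the couple monotonicity hypothesis and the definition of \ref{GVI}. The key observation is the telescoping identity $(Fy - f) + (f - Fx^*) = Fy - Fx^*$, which suggests splitting $\langle f - Fx^*, Ay\rangle$ as $\langle f - Fx^*, Ay - Ax^*\rangle + \langle f - Fx^*, Ax^*\rangle$. Then the first piece combines with $\langle Fy - f, Ay - Ax^*\rangle$ to yield $\langle Fy - Fx^*, Ay - Ax^*\rangle$, and the leftover piece is exactly the quantity controlled by the \ref{GVI} condition.

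For part (a), I would first perform the regrouping above to rewrite
\[
\langle Fy - f, Ay - Ax^*\rangle + \langle f - Fx^*, Ay\rangle = \langle Fy - Fx^*, Ay - Ax^*\rangle + \langle Ax^*, f - Fx^*\rangle.
\]
The first summand on the right is bounded below by $\gamma\|y - x^*\|^2$ thanks to the $\gamma$-strong monotonicity of the couple $(A,F)$. The second summand is nonnegative because $x^*$ solves \ref{GVI} and $f \in C$, so the \ref{GVI} inequality evaluated at $y := f$ reads $\langle Ax^*, f - Fx^*\rangle \geq 0$. Adding these two bounds yields \eqref{3a}.

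For part (b), the ``only if'' direction is verbatim the same regrouping, but now with just monotonicity of the couple one only gets $\langle Fy - Fx^*, Ay - Ax^*\rangle \geq 0$, which suffices for \eqref{1a}. The membership $Fx^* \in C$ is immediate from $x^* \in \mathrm{Sol}(A,F,C)$. For the ``if'' direction, the plan is to test \eqref{1a} at $y = x^*$: the terms $Ay - Ax^*$ and $Fy - Fx^*$ both vanish, collapsing the inequality to $\langle Ax^*, f - Fx^*\rangle \geq 0$ for every $f \in C$, which is precisely the variational inequality defining \ref{GVI}; combined with the standing assumption $Fx^* \in C$, this gives $x^* \in \mathrm{Sol}(A,F,C)$.

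I do not anticipate any real obstacle here; the argument is a short algebraic manipulation together with the definitions, and the statement should be viewed as the ``$F$-twisted'' analogue of the standard Minty-type reformulation of a variational inequality. The only point worth double-checking is that the regrouping identity indeed requires no assumption on $F$ (in particular, no linearity), so that the result applies to the nonlinear couples considered later in the paper.
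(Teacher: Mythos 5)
Your proof is correct and follows essentially the same route as the paper: the same regrouping $\left\langle Fy-f,Ay-Ax^*\right\rangle+\left\langle f-Fx^*,Ay\right\rangle=\left\langle Fy-Fx^*,Ay-Ax^*\right\rangle+\left\langle f-Fx^*,Ax^*\right\rangle$, the couple (strong) monotonicity for the first term, the \ref{GVI} inequality at $f\in C$ for the second, and the choice $y=x^*$ for the sufficiency in part (b). No gaps.
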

	\begin{proof}
		(a)  Since $(A,F)$ is $\gamma$-strongly monotone, for all $f\in C, y\in H$ we have
		\begin{equation}\label{2}
			\left\langle Fy-  f, Ay-Ax^*  \right\rangle +\left\langle  f-Fx^*, Ay  \right\rangle  \geq \gamma \| y-x^*  \|^2 + \left\langle  f-Fx^*, Ax^*  \right\rangle.
		\end{equation}
		Since $x^*$ is a solution of \ref{GVI}, it holds that $\left\langle  f-Fx^*, Ax^*  \right\rangle \geq 0$. Combining this and \eqref{2}, we get the desired result.\\
		(b) Since $(A,F)$ is monotone, for all $f\in C, y\in H$ it holds that
		\begin{equation*}
			\left\langle Fy-  f, Ay-Ax^*  \right\rangle +\left\langle  f-Fx^*, Ay  \right\rangle  \geq \left\langle  f-Fx^*, Ax^*  \right\rangle \geq 0.
		\end{equation*}
		The necessity is established. Conversely, choosing $y=x^*$  in \eqref{1a}, we get the sufficiency. 
	\end{proof}
The following result was established in a finite dimensional space by He and Dong \cite{HeDong}, however it remains true in any real Hilbert spaces.
	\begin{lemma}\label{existuniq}\cite[Theorem 3.2]{HeDong}
		Let $C$ be a nonempty closed convex subset of a real Hilbert space $\mathcal{H},$ and
		let $B :\mathcal{H} \to \mathcal{H}$ be a Lipschitz continuous and strongly monotone operator. Then the inverse variational inequality IVI($B, C$) has a unique solution.
	\end{lemma}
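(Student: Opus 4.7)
The plan is to reduce the inverse variational inequality IVI($B,C$) to a classical variational inequality on $C$ governed by $B^{-1}$, and then to invoke the well-known existence and uniqueness theorem for VIs with a strongly monotone, Lipschitz continuous operator on a closed convex subset of a Hilbert space.

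First, I would verify that $B:\mathcal{H}\to \mathcal{H}$ is a bijection. Injectivity is immediate from strong monotonicity. For surjectivity, fix $w\in\mathcal{H}$ and, writing $\mu$ for the strong monotonicity constant and $L_B$ for the Lipschitz constant of $B$, consider the map $T_w(x):=x-\rho(Bx-w)$. The standard expansion
\begin{equation*}
\|T_w(x)-T_w(y)\|^2 \leq (1-2\rho\mu+\rho^2 L_B^2)\,\|x-y\|^2
\end{equation*}
shows that for any $\rho\in(0,2\mu/L_B^2)$ the map $T_w$ is a strict contraction on $\mathcal{H}$, and its unique fixed point solves $Bx=w$. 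Thus $B^{-1}:\mathcal{H}\to \mathcal{H}$ is well defined.

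Next, performing the substitution $u=Bx$ shows that $x^*$ solves IVI$(B,C)$ if and only if $u^*=Bx^*\in C$ solves
\begin{equation*}
\langle B^{-1}u^*,\,y-u^*\rangle \geq 0 \qquad \forall y\in C,
\end{equation*}
which is the classical VI$(B^{-1},C)$. A short calculation from $\langle Bx-By,x-y\rangle\geq \mu\|x-y\|^2$ combined with the Lipschitz bound $\|Bx-By\|\leq L_B\|x-y\|$ verifies that $B^{-1}$ is $(\mu/L_B^2)$-strongly monotone and $(1/\mu)$-Lipschitz on $\mathcal{H}$. Existence and uniqueness of a solution $u^*\in C$ to VI$(B^{-1},C)$ then follow from the Banach fixed-point theorem applied to the projection map $u\mapsto P_C(u-\tau B^{-1}u)$ for $\tau$ in the usual admissible range, and setting $x^*:=B^{-1}u^*$ yields the unique solution of IVI$(B,C)$. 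Uniqueness may also be recovered directly: if $x_1,x_2$ both solve IVI$(B,C)$, then testing each inequality against the other's $Bx$ and adding gives $\langle Bx_1-Bx_2,x_1-x_2\rangle\leq 0$, which combined with $\mu$-strong monotonicity forces $x_1=x_2$.

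The principal obstacle I anticipate is the surjectivity of $B$ in the infinite-dimensional setting, since this is essentially the only place one must leave the finite-dimensional argument of \cite{HeDong}; once $B^{-1}$ is available and recognised as strongly monotone and Lipschitz, the remainder of the proof reduces to well-established VI theory. A purely fixed-point approach working directly at the level of $x$ (via iterates of the form $x\mapsto x-\beta[Bx-P_C(Bx-\alpha x)]$) appears unattractive, because the ensuing contraction estimates seem to require restrictive conditions such as $L_B<\mu\sqrt{2}$, which motivates taking the detour through $B^{-1}$.
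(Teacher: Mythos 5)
Your argument is correct, but it is worth pointing out that the paper itself offers no proof of this lemma: it simply cites \cite[Theorem 3.2]{HeDong} and asserts, without argument, that the finite-dimensional result ``remains true in any real Hilbert spaces.'' Your write-up therefore supplies exactly the justification the paper omits. The decisive step is the one you correctly isolate: surjectivity of $B$. In finite dimensions this can be obtained from Brouwer's invariance of domain (which is precisely the tool the authors use for the analogous invertibility claim in their Theorem~\ref{Th2.1}), but that route is unavailable in infinite dimensions; your Banach-contraction argument with $T_w(x)=x-\rho(Bx-w)$, $\rho\in(0,2\mu/L_B^2)$, is the standard Zarantonello-type replacement and is valid in any Hilbert space. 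Once $B^{-1}$ exists, your constants check out: $\langle B^{-1}u-B^{-1}v,u-v\rangle=\langle x-y,Bx-By\rangle\ge\mu\|x-y\|^2\ge(\mu/L_B^2)\|u-v\|^2$ gives the strong monotonicity of $B^{-1}$, and $\mu\|x-y\|\le\|Bx-By\|$ gives the $(1/\mu)$-Lipschitz bound, so the substitution $u=Bx$ converts IVI$(B,C)$ into the classical VI$(B^{-1},C)$, for which existence and uniqueness via $u\mapsto P_C(u-\tau B^{-1}u)$ are textbook. This inverse-operator reduction is entirely consonant with how the paper itself passes between GVI, VI and IVI in Theorems~\ref{Th2.1} and~\ref{Th2.2}, so your proof not only fills the gap but does so in the spirit the authors implicitly rely on. Your direct uniqueness check (testing each inequality against the other solution's $Bx$ and adding to get $\langle Bx_1-Bx_2,x_1-x_2\rangle\le0$) is also correct and makes the uniqueness part independent of the surjectivity discussion.
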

	\begin{lemma}\label{lemma31}\cite{Xu}
		Let $\{U_k \}; \{\theta_k \}; \{\zeta_k \} $ be positive sequences satisfying $\{\theta_k\} \subset (0;1)$; $\sum_{k=1}^\infty \theta_k =\infty$,   $\lim\limits_{k \to \infty} \dfrac{\zeta_k}{\theta_k}=0 $ and
		$$
		U_{k+1} \leq (1-\theta_k) U_k  +\zeta_k\quad \forall k\geq 0.
		$$
		Then, $U_k \to 0$ as $k\to \infty$.
	\end{lemma}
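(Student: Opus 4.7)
The plan is to follow the classical argument for this type of recurrence: fix an arbitrary $\varepsilon > 0$, use the hypothesis $\zeta_k/\theta_k \to 0$ to absorb the perturbation $\zeta_k$ into a small multiple of $\theta_k$, and then show that the homogeneous part decays to zero using the divergence of $\sum \theta_k$.

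Concretely, I would first pick $N_0$ such that $\zeta_k \leq \tfrac{\varepsilon}{2}\theta_k$ for all $k \geq N_0$, which is possible by the assumption $\lim_k \zeta_k/\theta_k = 0$. Substituting into the recursion yields, for $k \geq N_0$,
\begin{equation*}
U_{k+1} \leq (1-\theta_k) U_k + \tfrac{\varepsilon}{2}\theta_k.
\end{equation*}
Iterating this inequality from $N_0$ up to an index $n \geq N_0$ gives
\begin{equation*}
U_{n+1} \leq \Big(\prod_{k=N_0}^{n}(1-\theta_k)\Big) U_{N_0} + \tfrac{\varepsilon}{2}\sum_{k=N_0}^{n} \theta_k \prod_{j=k+1}^{n}(1-\theta_j),
\end{equation*}
with the convention that an empty product equals $1$.

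The key algebraic step is the telescoping identity
\begin{equation*}
\sum_{k=N_0}^{n} \theta_k \prod_{j=k+1}^{n}(1-\theta_j) = 1 - \prod_{k=N_0}^{n}(1-\theta_k),
\end{equation*}
obtained by writing $\theta_k = 1 - (1-\theta_k)$; this bounds the perturbation sum by $\tfrac{\varepsilon}{2}$. For the homogeneous term, the elementary inequality $1-\theta_k \leq e^{-\theta_k}$ combined with $\sum_k \theta_k = \infty$ shows that $\prod_{k=N_0}^{n}(1-\theta_k) \to 0$ as $n \to \infty$. Hence $\limsup_n U_n \leq \tfrac{\varepsilon}{2} < \varepsilon$, and since $\varepsilon$ was arbitrary, $U_k \to 0$.

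The main obstacle is really just recognizing the telescoping identity, which is the standard trick behind Xu's lemma; everything else is routine estimation. A minor point to be careful about is that the hypotheses only guarantee $\{\theta_k\} \subset (0,1)$, so $1-\theta_k \in (0,1)$ and the product $\prod(1-\theta_k)$ is a decreasing sequence in $(0,1)$, which makes the bound $1-\theta_k \leq e^{-\theta_k}$ applicable without sign issues.
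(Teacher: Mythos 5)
Your proof is correct: the $\varepsilon/2$ absorption of $\zeta_k$ into $\theta_k$, the telescoping identity for the weighted sum, and the bound $1-\theta_k \leq e^{-\theta_k}$ together with $\sum_k \theta_k = \infty$ give $\limsup_n U_n \leq \varepsilon/2$, and positivity of $U_k$ finishes the argument. The paper itself does not prove this lemma but simply cites Xu's 2002 paper, and your argument is precisely the standard proof given there, so there is nothing to compare beyond noting that you have supplied the omitted details correctly.
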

\section{Strongly monotone general variational inequalities}
\begin{theorem}\label{Th2.1}
Suppose that the operators $A$ and $ F,$ acting in a finite dimensional Hilbert space $H,$ are Lipschiz continuous with coefficients $L_A$ and $L_F$, respectively.  If in addition, the couple ($A, F$) is $\gamma$-strongly monotone.  Then 
\begin{itemize}
\item[i.] $A, F$ admit Lipschitz continuous inverses, moreover, for all $u,   v \in H,$ 
\begin{equation}\label{1}
\frac{1}{L_F}\|u-v\| \leq \|F^{-1}u - F^{-1}v\| \leq \frac{L_A}{\gamma}\|u-v\|, 
\end{equation}
$$\frac{1}{L_A}\|u-v\| \leq \|A^{-1}u - A^{-1}v\| \leq \frac{L_F}{\gamma}\|u-v\|.  $$
\item[ii.]  The general variational inequality GVI($A, F, C$) admits a unique solution. Moreover, $x^*$ is a solution to the general variational inequality GVI($A, F, C$) if and only if $u^*:= F x^*$ is  a solution to the strongly monotone variational inequality VI($AF^{-1}, C$) and if and only if $v^*:= Ax^*$ is a solution to the inverse variational inequality IVI($FA^{-1}, C$).
\end{itemize}
\end{theorem}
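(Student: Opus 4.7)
My plan is to establish (i) by extracting per-operator bilipschitz estimates from the joint strong monotonicity of the couple, then upgrading the resulting injectivity to bijectivity using finite-dimensionality; once this is in hand, (ii) follows by changing variables to reduce GVI$(A,F,C)$ to standard VI and IVI problems whose existence/uniqueness is already available.

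For (i), Cauchy–Schwarz applied to the defining inequality of the $\gamma$-strongly monotone couple yields
$$\|Ax - Ay\|\,\|Fx - Fy\| \;\geq\; \langle Ax - Ay,\, Fx - Fy\rangle \;\geq\; \gamma\|x-y\|^2$$
for all $x,y \in H$. Dividing by $\|Fx - Fy\| \leq L_F\|x-y\|$ (when $x \neq y$) gives $\|Ax - Ay\| \geq (\gamma/L_F)\|x-y\|$, and symmetrically $\|Fx - Fy\| \geq (\gamma/L_A)\|x-y\|$. These estimates force injectivity of $A$ and $F$ and, once their inverses are defined on all of $H$, immediately give the upper Lipschitz bounds on $F^{-1}$ and $A^{-1}$ stated in \eqref{1}; the lower Lipschitz bounds $1/L_F$ and $1/L_A$ are just restatements of the Lipschitz continuity of $F$ and $A$. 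To pass from injectivity to surjectivity I would invoke finite-dimensionality: the lower bounds imply that $A$ and $F$ are proper maps (preimages of bounded sets are bounded), so their images are closed, while Brouwer's invariance of domain makes the images open, and connectedness of $H$ then forces $A(H) = F(H) = H$.

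For (ii), with $F^{-1}$ defined on $H$, set $B := A F^{-1}$ and observe that for $u = Fx$, $v = Fy$,
$$\langle Bu - Bv,\, u - v\rangle \;=\; \langle Ax - Ay,\, Fx - Fy\rangle \;\geq\; \gamma\|F^{-1}u - F^{-1}v\|^2 \;\geq\; \tfrac{\gamma}{L_F^2}\|u-v\|^2,$$
so $B$ is strongly monotone; Lipschitz continuity follows by composing the bounds in \eqref{1} with the Lipschitz continuity of $A$. The classical theorem on strongly monotone Lipschitz VIs then provides a unique solution $u^*$ of VI$(B,C)$. The substitution $u^* = Fx^*$ turns the GVI conditions $Fx^* \in C$, $\langle Ax^*, y - Fx^*\rangle \geq 0$ into the VI conditions for $B$ and vice versa, establishing the first equivalence and, via bijectivity of $F$, transporting existence and uniqueness back to GVI$(A,F,C)$. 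An entirely parallel computation with $D := FA^{-1}$ shows $D$ is strongly monotone with constant $\gamma/L_A^2$ and Lipschitz, so Lemma \ref{existuniq} gives a unique solution of IVI$(D,C)$; the substitution $v^* = Ax^*$ matches the IVI conditions with the GVI conditions, providing the second equivalence.

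The main obstacle is the surjectivity step in (i): nothing in the hypotheses makes $A$ or $F$ individually monotone, so Browder–Minty-type arguments are not directly available, and the argument genuinely uses finite-dimensionality together with a topological input (properness plus invariance of domain). Everything else is a bookkeeping exercise combining Cauchy–Schwarz, Lipschitz composition, and the two existence lemmas already on hand.
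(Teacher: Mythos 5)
Your proposal is correct and follows essentially the same route as the paper: Cauchy--Schwarz on the strong monotonicity of the couple to get the lower Lipschitz bounds and injectivity, Brouwer invariance of domain plus closedness of the image (your properness argument is an equivalent packaging of the paper's Cauchy-sequence argument) to get surjectivity in finite dimensions, and then the change of variables $u^*=Fx^*$, $v^*=Ax^*$ reducing the GVI to VI($AF^{-1},C$) and IVI($FA^{-1},C$). If anything, you are slightly more explicit than the paper in verifying the strong monotonicity and Lipschitz constants of $AF^{-1}$ and $FA^{-1}$ before invoking the existence results, which is a welcome addition rather than a deviation.
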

\begin{proof}
From the $\gamma$-strong monotonicity of the couple $(A, F),$ one has
$$\gamma \|x - y\|^2 \leq \langle Ax -Ay, Fx-Fy \rangle \leq \|Ax-Ay\| \|Fx - Fy\|, \; \forall x, y \in H.$$ 
Using the last inequality and the Lipschitz continuity of $A,$ and $F,$ one finds\\
$$\|Fx-Fy\| \geq \frac{\gamma}{L_A}\|x -y\| \;\; {\rm and} \;\; \|Ax-Ay\| \geq \frac{\gamma}{L_F}\|x-y\|,$$
which show that $A, $ and $F$ are injective mappings. By Brouwer  invariance of domain theorem \cite{Brou}, the sets $A(H)$ and $F(H)$ are open. To prove that $A(H)$ is also closed, we assume $A(H) \ni y_n \to \tilde{y}.$ Choose $x_n \in H,$ s.t., $Ax_n = y_n.$ Since $\frac{\gamma}{L_F}\|x_n-x_m \| \leq \|Ax_n - Ax_m\| = \|y_n - y_m\|,$ the sequence $\{x_n\}$ is fundamental, hence $x_n \to \tilde{x}.$ Further, $\tilde{y} = \lim_{n\to \infty} y_n = \lim_{n\to \infty} Ax_n = A\tilde{x}.$ The last relation shows that $A(H)$ is closed, hence $A(H) =H.$ By the same argument, one can conclude that $F(H) = H.$ Thus both $A$ and $F$ are invertible, moreover the assertion \eqref{1} holds for $A^{-1}$ and $F^{-1}.$\\
Now suppose, $x^* \in {\rm Sol}(A, F, C)$ then $Fx^* \in C$ and $\langle Ax^*, y - Fx^*\rangle \geq 0 $ for all  $  y \in C.$ Denote $v^*:= A x^*, \; u^*:= F x^*,$ one gets $\langle v^*, y - FA^{-1}v^*\rangle \geq 0, \; FA^{-1}v^* \in C $ and $\langle AF^{-1} u^*, y - u^* \rangle \geq 0 $ for all $y \in C.$ That means $u^*$ is the unique solution to strongly monotone variational inequality VI($AF^{-1}, C$) and $v^*$ is the unique solution to the strongly inverse variational inequality IVI($FA^{-1}, C$). Conversely, if $u^* (v^*)$ is the unique solution to strongly monotone variational inequality VI($AF^{-1}, C$) (strongly monotone inverse variational inequality IVI($FA^{-1}, C$)), then $x^* = F^{-1}u^* = A^{-1}v^*$ is a unique solution to the GVI($A, F, C$). $\blacksquare$
\end{proof}
{\bf Remark 2.1}  The finite dimensionality of $H$ is used only for the proof of the invertibility of $A,$ and $F.$ The conclusions of Proposition 1 may not true, if $H$ is an infinite dimensional Hilbert space. In particular, the operators $A,  F$ may not invertible, even if ($A, F$) is a strongly monotone couple of  Lipschitz continuous operators. Indeed, let $H$ be a real seperable Hilbert space with an orthonormal basis $\{e_i\}_{i=1}^\infty$ and $Ax = Fx = \sum_{i=1}^\infty x_ie_{i+1},$  for any $x = \sum_{i=1}^\infty x_i e_i.$  Clearly, $A$ and $F$ are  bounded linear non-surjective operators with im $A$ = im $F$ = span$(\{e_i\}_{i=2}^\infty) \ne H$. However, the couple ($A,F$) is $1$-strongly monotone.
\vskip0.1cm
\begin{theorem}\label{Th2.2}
Let $C$ be a nonempty closed convex subset in a real Hilbert space $H,$ and $A, F: H \to H$ be Lipschitz continuous operators on $H$ with constants $L_A, \; L_F, $ respectively. Assume that $A$ is a $\lambda$-strongly monotone operator and the couple ($A, F$) is $\gamma$-strongly monotone on $H.$ Then, the general variational inequality GVI($A, F, C$) admits a unique solution in $H,$ i.e., Sol($A, F, C$) = $\{x^*\}.$
\end{theorem}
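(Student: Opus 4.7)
The plan is to reduce GVI$(A,F,C)$ to the inverse variational inequality IVI$(FA^{-1},C)$, at which point Lemma \ref{existuniq} finishes the job. This is the same reduction used in Theorem \ref{Th2.1}, but there it relied on finite dimensionality to invert $A$ and $F$; here the extra hypothesis that $A$ itself is $\lambda$-strongly monotone and Lipschitz will supply invertibility of $A$ in arbitrary real Hilbert spaces, circumventing the obstruction flagged in Remark 2.1.

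First I would establish that $A : H \to H$ is a bijection. Injectivity and the bound $\|A^{-1}u - A^{-1}v\| \leq \lambda^{-1}\|u-v\|$ on $A(H)$ follow immediately from $\lambda$-strong monotonicity. For surjectivity, the classical device is to fix $z \in H$ and observe that $T_\mu y := y - \mu(Ay - z)$ is a contraction on $H$ for $\mu \in (0, 2\lambda/L_A^2)$; its unique fixed point satisfies $Ay = z$. (Equivalently, one may invoke Browder--Minty, since $A$ is continuous, monotone, and coercive.) This yields a globally defined $A^{-1}$ with Lipschitz constant $1/\lambda$.

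Next I would verify that $G := F A^{-1}$ is Lipschitz continuous and strongly monotone on $H$. Lipschitz continuity with constant $L_F/\lambda$ is immediate by composition. For strong monotonicity, setting $x := A^{-1}u$ and $y := A^{-1}v$, the $\gamma$-strong monotonicity of the couple $(A,F)$ combined with the $L_A$-Lipschitz continuity of $A$ gives
\[
\langle Gu - Gv,\, u - v\rangle \;=\; \langle Fx - Fy,\, Ax - Ay\rangle \;\geq\; \gamma\|x-y\|^2 \;\geq\; \frac{\gamma}{L_A^2}\,\|u-v\|^2.
\]

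Finally, Lemma \ref{existuniq} yields a unique $v^* \in H$ solving IVI$(G,C)$. Setting $x^* := A^{-1}v^*$, one checks that $Fx^* = Gv^* \in C$ and $\langle Ax^*, y - Fx^*\rangle = \langle v^*, y - Gv^*\rangle \geq 0$ for every $y \in C$, so $x^* \in \mathrm{Sol}(A,F,C)$; the injectivity of $A$ transfers uniqueness from $v^*$ to $x^*$. The only step that demands genuine care, and the one that distinguishes Theorem \ref{Th2.2} from Theorem \ref{Th2.1}, is the surjectivity of $A$ in the second paragraph, since Brouwer's invariance of domain is unavailable in infinite dimension; the remainder is essentially bookkeeping that converts the joint strong monotonicity of $(A,F)$ into strong monotonicity of $G = FA^{-1}$.
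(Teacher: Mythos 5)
Your proposal is correct and follows essentially the same route as the paper: invert $A$, show $FA^{-1}$ is $\frac{\gamma}{L_A^2}$-strongly monotone and $\frac{L_F}{\lambda}$-Lipschitz, and apply Lemma~\ref{existuniq} to IVI($FA^{-1},C$). The only difference is that you spell out the surjectivity of $A$ (via the contraction $y\mapsto y-\mu(Ay-z)$), which the paper simply asserts as a known consequence of Lipschitz continuity plus strong monotonicity.
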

\begin{proof} Since $A$ is $L_A$-Lipschitz continuous and $\lambda$-strongly monotone on $H,$ it possesses a continuous inverse $A^{-1}.$ The strong monotonicity of the couple ($A, F$) implies that $FA^{-1}$ is $\frac{\gamma}{L_A^2}$-strongly monotone and $\frac{L_F}{\lambda}$-Lipschitz continuous. According to Lemma \ref{existuniq}, the inverse variational inequality IVI($ FA^{-1}, C$ ) has a unique solution $u^*$, hence, GVI($A,; F;C$) admits a unique solution $x^* := A^{-1}u^*.$ $\blacksquare$
\end{proof}
\noindent{\bf Remark 2.2} Tangkhawiwetkul \cite{Tang} proposed a neurall network for the so-called generalized inverse mixed variational inequality with a strongly monotone couple of operators. In particular, based on the contraction mapping theorem, the author proved Theorem 2.2 under an additional assumptions:
$$ \sqrt{L_A^2 - 2\gamma + L_F^2} + \sqrt{1 - 2\lambda + L_A^2} < 1,$$
where $\gamma < \frac{L_A^2 + L_F^2}{2}; \; \lambda < \frac{1 + L_A^2}{2}.$\\
This additional condition narrows the class of GVI problems being studied.
%The Tikhonov regularization technique has recently been proved to be a valuable tool in studying many mathematical problems.\\
\section{Regularized neurall network}
\noindent{Main assumptions}
\begin{assumption}\label{assumption1}We consider the problem \ref{GVI} under the following conditions:
\begin{itemize} 
				\item[\textbf{(A1)}] $A: H \to H $ is  $\lambda$- strongly monotone, $L_A$-Lipschitz continuous. $F$ is $L_F$-Lipschitz continuous and $A,   F$ are sequentially weak-to-weak continuous;
				\item[\textbf{(A2)}] the couple ($A, F$) is monotone;
				\item[\textbf{(A3)}] the solution set Sol($A, F, C$)  is nonempty. Moreover, the set $\mathcal C:= A({\rm Sol}(A, F, C))$ is convex.
\end{itemize}
\end{assumption}
\begin{remark}\label{CondA3}
Suppose $A: H \to H$ is a positive define, bounded linear operator, i.e., $\langle Ax, x\rangle \geq \ell\|x\|^2$   for all $x\in H,$ then $A$ satisfies Assumption (A1) with $L_A = \|A\|, \; \lambda = \ell.$  Moreover, $A$ is weak-weak continuous and the set $A({\rm Sol}(A, F, C))$  is convex if and only if  Sol($A, F, C$) is convex.
\end{remark}
We begin with the following obvious result, whose proof is based on the properties of the metric projection $P$ onto the closed convex set $C:$
\begin{lemma}\label{L3.1}
 $x^* \in$  Sol($A, F, C$) if and only if it is a solution to the equation
\begin{equation} \label{Eq1}
Fx^* = P(Fx^* - \mu Ax^*),
\end{equation}
where $\mu$ is an arbitrary fixed positive number.
\end{lemma}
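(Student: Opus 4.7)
The plan is to apply the first characterization of the metric projection quoted earlier in the excerpt, namely that $z = Pw$ if and only if $\langle w - z, y - z\rangle \le 0$ for every $y \in C$, with the substitution $w = Fx^* - \mu A x^*$ and $z = Fx^*$.

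First I would handle the forward implication. Assume $x^* \in \mathrm{Sol}(A,F,C)$, so that $Fx^* \in C$ and $\langle Ax^*,\, y - Fx^*\rangle \ge 0$ for all $y \in C$. Multiplying by the fixed scalar $\mu > 0$ and flipping the sign gives
\begin{equation*}
\langle (Fx^* - \mu A x^*) - Fx^*,\, y - Fx^*\rangle = -\mu \langle Ax^*,\, y - Fx^*\rangle \le 0 \quad \forall y \in C,
\end{equation*}
which, together with $Fx^* \in C$, is exactly the characterization (i) of the metric projection applied to $w = Fx^* - \mu Ax^*$. Hence $Fx^* = P(Fx^* - \mu Ax^*)$.

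For the converse, suppose $Fx^* = P(Fx^* - \mu Ax^*)$. Since $P$ maps into $C$, automatically $Fx^* \in C$. The projection characterization then yields $\langle -\mu Ax^*,\, y - Fx^*\rangle \le 0$ for every $y \in C$, and dividing by $-\mu < 0$ (which flips the inequality) gives $\langle Ax^*,\, y - Fx^*\rangle \ge 0$ for all $y \in C$. Thus $x^* \in \mathrm{Sol}(A,F,C)$.

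There is no real obstacle here; the two directions are symmetric applications of the projection inequality, and the role of $\mu > 0$ is purely to rescale, so its specific value is irrelevant. The only thing to be careful about is keeping track of the sign when dividing by $-\mu$, and noting that membership $Fx^* \in C$ is free in the reverse direction because $P$ projects onto $C$.
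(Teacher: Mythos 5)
Your proof is correct and is exactly the argument the paper has in mind: the lemma is stated as an immediate consequence of the variational characterization $z = Pw \iff \langle w - z, y - z\rangle \le 0\ \forall y \in C$, applied with $w = Fx^* - \mu Ax^*$ and $z = Fx^*$, and both of your directions carry this out cleanly.
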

In what follows, we assume the Assumption (A3) is satisfied. Clearly, Sol($A,F,C$) as well as $\mathcal{C}$ are closed subsets.\\
Since $A$ is $\lambda$-strongly monotone and $L_A$-Lipschitz continuous, its inverse $\mathcal{A}:= A^{-1}$ is $\frac{1}{\lambda}$-Lipschitz continuous and $\frac{\lambda}{L_A^2}$-strongly monotone.
Thus, the following bilevel variational inequality problem:
\begin{equation}\label{BVI}
\text{Find }u \in \mathcal{C}, \text{ such that, } \langle \mathcal{A}u, v - u \rangle \geq 0, \; \forall v \in \mathcal{C}  \tag{BVI}  
\end{equation}
admits a unique solution,  denoted by $u^\dagger$. Let $x^\dagger := \mathcal{A}u^\dagger \in {\rm Sol}(A,F,C).$
\vskip0.1cm
For each positive number $\alpha >0,$ we define the operator $F_\alpha := F + \alpha I,$ where $I$ is the identity operator in $H.$ Clearly, $F_\alpha$ is $(L_F + \alpha)$-Lipschitz continuous, and the couple ($A,  F_\alpha$) is $\alpha \lambda$-strongly monotone. According to Theorem 2.2, the so-called regularized general variational inequality GVI($A, F_\alpha, C$) admits a unique solution, denoted by $x_\alpha.$
\vskip0.1cm
{\bf Lemma 3.2}\label{lemma1h} Under Assumption \ref{assumption1}, the net $\{x_\alpha\}$ possesses the following properties:
\begin{itemize}
\item[(i)] boundedness on $(0, \infty);$ 
\item[(ii)] $\lim_{\alpha \to 0} x_\alpha = x^\dagger \in {\rm Sol}(A, F, C),$ where $x^\dagger=\mathcal A u^\dagger$ and $u^\dagger$ is the unique solution of the bilevel variational inequality \eqref{BVI};
\item[(iii)]  There exists $M > 0$ such that for all $\alpha; \beta  > 0,$ the estimate
$\|x_\alpha - x_\beta \| \leq \frac{M |\alpha -\beta|}{\beta}$ holds.
\end{itemize}
\begin{proof} (i) Let $x^* \in {\rm Sol}(A, F, C)$. Since $x_\alpha$ is the unique solution of the $\alpha\lambda$-strongly monotone inverse variational inequality GVI($A,F_\alpha$,C), according to Lemma \ref{lemma1}-(a), we have
		\begin{equation}\label{4}
			\left\langle F_\alpha x^*-  f, Ax^*-Ax_\alpha  \right\rangle +\left\langle  f-F_\alpha x_\alpha, Ax^*  \right\rangle  \geq \lambda \alpha \| x^*-x_\alpha  \|^2 \quad \forall f\in C.
		\end{equation}		 
		Taking $f=Fx^*$ in \eqref{4}, we get
		\begin{equation}\label{5}
			\alpha	\left\langle  x^*, Ax^*-Ax_\alpha  \right\rangle +\left\langle  Fx^* -F_\alpha x_\alpha, Ax^*  \right\rangle  \geq \lambda \alpha \| x^*-x_\alpha  \|^2.
		\end{equation}	
		Noting that $ \left\langle  Fx^* -F_\alpha x_\alpha, Ax^*  \right\rangle  \leq 0  $, from \eqref{5} we arrive at
		\begin{equation}\label{6}
			\left\langle  x^*, Ax^*-Ax_\alpha  \right\rangle  \geq \lambda \| x^*-x_\alpha  \|^2.
		\end{equation}
		Using the Lipschitz continuity of $A$, we get
		\begin{equation*}
			\| x^*-x_\alpha  \| \leq \frac {L_A}{\lambda } \|x^* \|.
		\end{equation*}
		Thus the net $\{x_\alpha  \}$ is bounded on $(0;\infty)$.\\
		(ii) Let $\hat x$ be a cluster point of $\{x_\alpha\}$.
        There exists a subsequence $\{x_{\alpha_k}\} \subset \{x_\alpha \}$ satisfying $ x_{\alpha_k} \rightharpoonup \hat x$. Using the definition of $x_{\alpha_k} \in {\rm Sol}(A, F_\alpha,C)$ and Lemma \ref{lemma1}-(a), we have
		\begin{equation}\label{7}
			\left \langle  Ay-Ax_{\alpha_k}, F y +\alpha_k y -f  \right \rangle + \left \langle  Ay, f-F x_{\alpha_k} -\alpha_k  x_{\alpha_k} \right \rangle \geq 0 \quad \forall y\in H,  f\in C.
		\end{equation}	
		In \eqref{7}, taking the limit as $k\to \infty$, using the weak-weak continuity of $A, F$, we get
		\begin{equation}\label{8}
			\left \langle  Ay-A\hat x, F y  -f  \right \rangle + \left \langle  Ay, f-F \hat x  \right \rangle \geq 0 \quad \forall y\in \mathcal  H,  f\in C.
		\end{equation}
        We have $F_{\alpha_k} x_{\alpha_k} \in C$ for all $k \geq 0$ and $F_{\alpha_k} x_{\alpha_k} =Fx_{\alpha_k}+\alpha_kx_{\alpha_k} \rightharpoonup F \hat x$. 
        Moreover, the set $C$ being closed and convex is also weakly closed. Hence,  
        \begin{equation}\label{haj1}
            F \hat x \in C.
        \end{equation}
        Lemma \ref{lemma1}-(b), together with \eqref{8} and \eqref{haj1} implies that $\hat x \in \text{Sol}(A,F,C)$. On the other hand, from \eqref{6}, it follows that
		\begin{equation}\label{hai1}
			\left\langle  x^*, Ax^*-A\hat x  \right\rangle  \geq 0 \quad \forall x^* \in {\rm Sol}(A,F,C).
		\end{equation}
Recall that the strongly monotone variational inequality problem VI($\mathcal A, \mathcal C$) admits a unique solution $u^\dagger$.\\
From \eqref{hai1}, we have $\hat u:= A \hat x$ is a solution of the problem
\begin{equation}\label{gf1}
\text{find } \hat u \in \mathcal C \text{ such that } \left\langle \mathcal{A} u,u- \hat u \right\rangle  \geq 0 \quad \forall u \in  \mathcal C.
\end{equation}
By the Minty lemma \cite[Lemma 1.5]{Kinderlehrer}, this problem is equivalent to the problem VI($\mathcal A, \mathcal C$), which admits a unique solution $u^\dagger$. Thus, $ \hat u \equiv u^\dagger,$   the net $\{x_\alpha\}$ has a unique weak cluster point and hence $x_\alpha \rightharpoonup \ x^\dagger =\mathcal A u^\dagger$. Finally, in \eqref{6}, taking $x^* = x^\dagger$ and letting $\alpha \to 0$, we get $x_\alpha \to  x^\dagger.$
        
		(iii)  From the definition of $x_\alpha$, we have
		\begin{equation}\label{9}
			\left\langle F_\beta x_\beta- F_\alpha x_\alpha  ,A  x_\alpha  \right\rangle  \geq 0
		\end{equation}	
		Similarly,
		\begin{equation}\label{10}
			\left\langle   F_\alpha x_\alpha -F_\beta x_\beta  ,A  x_\beta \right\rangle  \geq 0.
		\end{equation}	
		Adding \eqref{9} and \eqref{10} yields
		\begin{equation}\label{11}
			\left\langle F_\beta x_\beta- F_\beta x_\alpha  , A x_\alpha -Ax_\beta \right\rangle +
			\left\langle F_\beta x_\alpha- F_\alpha x_\alpha  , A x_\alpha -Ax_\beta \right\rangle 
			\geq 0.
		\end{equation}	
	Using the $\beta \lambda$-strong monotonicity of the couple $(A,F_\beta)$, we get
		\begin{equation}\label{12}
			(\beta-\alpha)	\left\langle  x_\alpha  , A x_\alpha -Ax_\beta \right\rangle 
			\geq \beta \lambda \|  x_\alpha -x_\beta \|^2.
		\end{equation}
        Denoting $K:=\sup\{\|x_\alpha\|: \alpha>0\}$ and $M:= \frac{L_A K}{\lambda},$ from \eqref{12}, we have
    \begin{equation}\label{12a}
    \|  x_\alpha -x_\beta \| \leq \frac{M}{\beta}|\beta -\alpha|,  \; \;\blacksquare
    \end{equation}  
	\end{proof}
According to Lemma \ref{L3.1}, each regularized solution $x_\alpha$ satisfies the relation
$$ P(F_\alpha x - \mu A x) - F_\alpha x = 0.$$
This observation leads us to the following regularized neural network
\begin{equation}\label{RegNN}
\begin{cases}
			& \dot x(t)=f(\alpha (t), \mu (t), x(t))\\
			& x(0)= x_0 \in  H,
\end{cases}
	\end{equation}
where $\alpha(t), \; \mu(t) : [0, \infty) \to (0, \infty)$ are  continuous functions and $f(\alpha (t), \mu (t), x(t)) := P\Big(F_{\alpha(t)} x(t) -\mu (t) Ax(t)  \Big)-F_{\alpha(t)} x(t)$.\\
Denote by $AC_{\mathrm{loc}}\left([0, +\infty), H \right) \; (L^1_{\mathrm{loc}}\left([0, +\infty), H \right)) $ the spaces of locally absolutely  continuous (locally integrable) functions, i.e., the spaces of all absolutely continuous (integrable) functions on each finite interval $[0, T], \; T >0.$\\	
A function $x: [0, +\infty) \to H$ is said to be a strong global solution of \eqref{RegNN} if the following properties hold:
	\begin{itemize}
		\item[a)] $x \in AC_{\mathrm{loc}}[0, +\infty),$ i.e., $x(t)$ is absolutely continuous on each interval $[0, T],$ for any $0<T< +\infty;$
		\item[b)] For almost everywhere $t \in [0, +\infty)$ equation \eqref{RegNN} holds;
		\item[c)] $x(0) = x_0.$
	\end{itemize}
\begin{theorem}\label{Th3.3}
 Let $\alpha(t), \; \mu(t): [0, \infty) \to (0, \infty)$ be given positive continuous functions. Under Assumption \ref{assumption1}, the neural network \eqref{RegNN} admits a unique solution.
 \end{theorem}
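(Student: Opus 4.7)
The plan is to invoke a standard Cauchy--Lipschitz (Picard--Lindel\"of) type existence and uniqueness theorem for non-autonomous ODEs in Hilbert spaces. The task then reduces to (i) verifying that the right-hand side $f(\alpha(t),\mu(t),\cdot)$ is Lipschitz in $x$ with a Lipschitz constant continuous in $t$, and (ii) establishing a linear growth bound in $x$ to preclude finite-time blow-up so that the maximal solution is in fact global.

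\textbf{Step 1 (Lipschitz estimate in $x$).} For fixed $t$, I would use the nonexpansivity of the metric projection $P$ (property (ii) preceding Lemma \ref{lemma1}), the Lipschitz continuity of $F_{\alpha(t)}=F+\alpha(t)I$ with constant $L_F+\alpha(t)$, and the $L_A$-Lipschitz continuity of $A$. Splitting the increment of $f$ into a projection term and an $F_\alpha$ term and applying the triangle inequality,
$$\bigl\|f(\alpha(t),\mu(t),x)-f(\alpha(t),\mu(t),y)\bigr\|\leq \bigl[2(L_F+\alpha(t))+\mu(t)L_A\bigr]\|x-y\|=:L(t)\|x-y\|,$$
for all $x,y\in H$. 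Since $\alpha,\mu$ are continuous on $[0,\infty)$, the coefficient $L(t)$ is continuous and hence uniformly bounded on every compact subinterval $[0,T]$.

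\textbf{Step 2 (Local existence and uniqueness).} For each fixed $x\in H$, the map $t\mapsto f(\alpha(t),\mu(t),x)$ is continuous, since $A$, $F$, $P$ are continuous and $\alpha,\mu$ are continuous. Combined with the Lipschitz-in-$x$ estimate from Step 1, the classical non-autonomous Cauchy--Lipschitz theorem in Hilbert spaces yields a unique maximal strong solution $x\in AC_{\mathrm{loc}}([0,T^*),H)$ of \eqref{RegNN} on some maximal existence interval $[0,T^*)$ with $T^*\in(0,+\infty]$.

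\textbf{Step 3 (Global existence).} From Step 1 together with boundedness of $f(\alpha(t),\mu(t),0)$ on compact intervals (continuity in $t$), one obtains the linear growth bound
$$\bigl\|f(\alpha(t),\mu(t),x)\bigr\|\leq L(t)\|x\|+\bigl\|f(\alpha(t),\mu(t),0)\bigr\|,$$
with both coefficients continuous in $t$. Applying this in the estimate $\tfrac{d}{dt}\|x(t)\|\leq \|\dot x(t)\|$ and invoking Gr\"onwall's lemma on $[0,T]$ for any $T<T^*$ shows that $\|x(t)\|$ cannot blow up in finite time; the standard blow-up alternative then forces $T^*=+\infty$.

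\textbf{Main obstacle.} There is no genuine analytic obstacle once the nonexpansivity of $P$ is exploited; the only care needed is that the regularization parameters $\alpha(t),\mu(t)$ are merely continuous, not constant, so one must use the non-autonomous version of Cauchy--Lipschitz and keep the explicit $t$-dependence of $L(t)$ throughout the Gr\"onwall step. This is routine but must be written with some precision to make it clear that $L(t)$ is locally integrable (in fact continuous) and that the growth bound is also locally integrable in $t$.
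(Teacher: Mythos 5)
Your proposal is correct and follows essentially the same route as the paper: the identical Lipschitz constant $2(L_F+\alpha(t))+\mu(t)L_A$ obtained from the nonexpansivity of $P$, continuity in $t$, and a linear growth bound, followed by an appeal to the (non-autonomous) Cauchy--Lipschitz theorem. The only cosmetic difference is that the paper cites a version of the Cauchy--Lipschitz--Picard theorem (Haraux) that delivers global existence directly from the locally integrable Lipschitz and growth bounds, whereas you reach the same conclusion by a local-existence-plus-Gr\"onwall continuation argument.
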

 \begin{proof}
 First observe that the two functions $\alpha \longmapsto f(\alpha, \mu, x):= P(Fx+\alpha x- \mu Ax) - (Fx + \alpha x)$ and $\mu \longmapsto f(\alpha, \mu, x)$ are continuous on $[0, +\infty).$ Then the mapping $f(\alpha(t),\mu(t),\cdot): H \to H$ for all  $t\in [0, +\infty)$ is continuous. Moreover, by the nonexpansiveness of $P$ and Lipschitz continuity of $A, \; F,$ we get
\mygreen{ 	$$\forall x, y \in H, \; \|f(\alpha(t),\mu(t), x)- f(\alpha(t),\mu(t), y)\| \leq M(t)\|x-y\|,$$}
    where 
$M(t):= 2L_F + 2 \alpha(t) + \mu(t)L_A \in C[0, +\infty).$	\\	
Besides, the function   $t\longmapsto \varphi(t):= \|f(\alpha(t), \mu(t), x^\dagger)\|,$ where $x^\dagger$ is the unique solution of the bilevel problem BVI($A,F,C$), is continuous on $[0, +\infty).$ \\
For any $x \in H,$ we have  
\begin{align*}
 \|f(\alpha(t), \mu(t), x)\| & \leq \|f(\alpha(t), \mu(t), x^\dagger)\| + \|f(\alpha(t), \mu(t), x) - f(\alpha(t), \mu(t), x^\dagger)\| \\
& \leq \varphi(t) + M(t) \|x - x^\dagger\| \\
&\leq \varphi(t)+M(t)\|x^\dagger\|+M(t)\|x\| \\
&\leq \sigma(t)(1 + \|x\|),
\end{align*}	
where the function $\sigma(t) := \max\{\varphi(t) + M(t)\|x^\dagger\|, M(t)\}$ is also continuous on $[0, +\infty),$  hence $\sigma \in L^1_{\mathrm{loc}}[0, +\infty). $\\
Finally, from the fact that the mapping $f(\alpha(t),\mu(t),\cdot) $ is continuous on $[0, +\infty)$ and the estimate $ \|f(\alpha(t), \mu(t), x)\| \leq \sigma(t)(1 + \|x\|),$ with $\sigma \in L^1_{\mathrm{loc}}[0, +\infty),$ holds for all $x\in H,$ we conclude that 
$$\forall x \in H, \; f(\cdot, \cdot, x) \in L^1_{\mathrm{loc}}\left([0, +\infty), H\right).$$
Thus, by the Cauchy-Lipschitz-Picard theorem \cite[Prop. 6.2.1]{Haraux}, the Cauchy problem \eqref{RegNN} admits a unique global solution. $\blacksquare$
\end{proof}
\begin{theorem}\label{Theorem_continuous_convergence}
Let $x(t)$ be a strong global solution of \eqref{RegNN}. Suppose that Assumption \ref{assumption1} is satisfied,  $A$ is a linear, self-adjoint operator and that 
$$
\lim_{t\to \infty} \alpha(t) = 0, \; \lim_{t\to \infty} \alpha(t) \mu(t) =\infty,\quad \lim_{t\to \infty} \frac{\dot \alpha (t)}{\alpha^2 (t)} =0,\quad \bigintssss_0^\infty \alpha(t) dt=\infty, 
$$
where $\alpha(t)$ is a given positive continuously differentiable function and $\mu(t)$ is a chosen positive continuous function. Then, $\|x(t) - x^\dagger \| \to 0,$  as $t \to \infty.$ 
\end{theorem}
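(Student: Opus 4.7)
The plan is to reduce the theorem to showing that $\|x(t) - x_{\alpha(t)}\| \to 0$ as $t \to \infty$, since Lemma 3.2(ii) already guarantees $x_{\alpha(t)} \to x^\dagger$ (because $\alpha(t) \to 0$). The triangle inequality
\[
\|x(t) - x^\dagger\| \leq \|x(t) - x_{\alpha(t)}\| + \|x_{\alpha(t)} - x^\dagger\|
\]
then closes the argument, so everything hinges on controlling the gap between the trajectory and the regularized solution.

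Exploiting the self-adjointness of $A$ (with $\lambda I \preceq A \preceq L_A I$), I would introduce the $A$-weighted energy
\[
V(t) := \tfrac12\bigl\langle A\bigl(x(t) - x_{\alpha(t)}\bigr),\, x(t) - x_{\alpha(t)}\bigr\rangle,
\]
which is equivalent to $\tfrac12\|x(t) - x_{\alpha(t)}\|^2$ up to constants. The crucial input is the firm nonexpansiveness of $P$ applied to $u := F_{\alpha(t)}x(t) - \mu(t)Ax(t)$ and $v := F_{\alpha(t)}x_{\alpha(t)} - \mu(t)Ax_{\alpha(t)}$. From the neural network one has $P(u) = F_{\alpha(t)}x(t) + \dot x(t)$, while Lemma \ref{L3.1} gives $P(v) = F_{\alpha(t)}x_{\alpha(t)}$, so $P(u) - P(v) = F_{\alpha(t)}(x(t) - x_{\alpha(t)}) + \dot x(t)$. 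Expanding $\|P(u)-P(v)\|^2 \leq \langle P(u)-P(v),\, u-v\rangle$ and invoking the $\alpha(t)\lambda$-strong monotonicity of the couple $(A, F_{\alpha(t)})$ yields, suppressing the $t$,
\[
\|\dot x\|^2 + \langle F_\alpha(x - x_\alpha),\dot x\rangle + \mu\alpha\lambda\|x - x_\alpha\|^2 + \mu\,\langle A(x-x_\alpha),\dot x\rangle \leq 0.
\]

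Differentiating $V$ using the self-adjointness of $A$ gives $\dot V(t) = \langle A(x - x_\alpha),\dot x\rangle - \langle A(x - x_\alpha),\tfrac{d}{dt}x_{\alpha(t)}\rangle$, where $\tfrac{d}{dt}x_{\alpha(t)}$ exists almost everywhere by $C^1$-regularity of $\alpha$ combined with Lemma 3.2(iii), which supplies $\|\tfrac{d}{dt}x_{\alpha(t)}\| \leq M|\dot\alpha(t)|/\alpha(t)$. Substituting the bound from the key inequality (after dividing by $\mu$), absorbing the $F_\alpha$-cross term via Young's inequality, and bounding the drift term by Young's inequality with weight $\alpha(t)\lambda/4$, one obtains, for $t$ large enough (guaranteed by $\alpha\mu \to \infty$),
\[
\dot V(t) \leq -\frac{\alpha(t)\lambda}{2L_A}\,V(t) + r(t), \qquad r(t) := C\,\alpha(t)\Bigl(\tfrac{\dot\alpha(t)}{\alpha(t)^2}\Bigr)^2,
\]
with $C$ depending only on $L_A, L_F, M, \lambda$. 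Since $\int_0^\infty \alpha(t)\,dt = \infty$ the integrating factor $\exp\bigl(-\int_0^t \alpha\lambda/(2L_A)\,ds\bigr)$ vanishes, and since $\dot\alpha/\alpha^2 \to 0$ one has $r(t)/\alpha(t) \to 0$; a continuous-time analog of Lemma \ref{lemma31} (Gr\"onwall with vanishing forcing relative to the damping) then gives $V(t) \to 0$, hence $\|x(t) - x_{\alpha(t)}\| \to 0$ and the theorem follows.

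The delicate step \emph{is} the calibration of the Young-inequality weights so that the coercive term $-c\alpha(t)V(t)$ survives with a uniform constant while the residual $r(t)$ satisfies $r(t)/\alpha(t) \to 0$ under exactly the stated hypotheses $\alpha\mu \to \infty$ and $\dot\alpha/\alpha^2 \to 0$ and no stronger ones. A secondary technicality is justifying the a.e.\ existence and the pointwise bound on $\tfrac{d}{dt}x_{\alpha(t)}$, handled by the local Lipschitzness of $\alpha \mapsto x_\alpha$ on intervals bounded away from $0$ (Lemma 3.2(iii)) together with $\alpha \in C^1$.
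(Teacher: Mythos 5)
Your proposal is correct and follows the same overall architecture as the paper's proof: reduce to $\|x(t)-x_{\alpha(t)}\|\to 0$ via Lemma 3.2(ii), use the $A$-weighted Lyapunov function $V(t)=\tfrac12\langle A(x(t)-x_{\alpha(t)}),x(t)-x_{\alpha(t)}\rangle$, control $\tfrac{d}{dt}x_{\alpha(t)}$ by $M|\dot\alpha/\alpha|$ from Lemma 3.2(iii), and close with a differential inequality whose damping is of order $\alpha(t)$ and whose forcing is governed by $\dot\alpha/\alpha^2$. You differ in two localized steps, both sound. First, you obtain the key estimate
\[
\langle A(x-x_{\alpha}),\,y-F_{\alpha}x\rangle\le -\Bigl(\alpha\lambda-\tfrac{(L_F+\alpha)^2}{2\mu}\Bigr)\|x-x_{\alpha}\|^2-\tfrac1{2\mu}\|y-F_{\alpha}x\|^2
\]
from the firm nonexpansiveness of $P$ applied to the pair $F_\alpha x-\mu Ax$ and $F_\alpha x_\alpha-\mu Ax_\alpha$ together with the fixed-point identity of Lemma 3.1; the paper instead combines the variational characterization of $P$ at $u=F_\alpha x_\alpha$ with the GVI inequality satisfied by $x_\alpha$. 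Expanding your firm-nonexpansiveness inequality and absorbing the $F_\alpha$-cross term by Young's inequality reproduces exactly the paper's bound, so the two derivations are equivalent in outcome. Second, you apply Young's inequality with weight $\alpha\lambda/4$ to the drift term $\langle A(x-x_\alpha),\tfrac{d}{dt}x_{\alpha}\rangle$ so as to land on a \emph{linear} inequality $\dot V\le-\tfrac{\alpha\lambda}{2L_A}V+C\alpha(\dot\alpha/\alpha^2)^2$ and invoke the continuous analogue of Lemma 1.3, whereas the paper keeps the term proportional to $\sqrt{V}$, passes to a differential inequality for $\sqrt{V}$, and evaluates the resulting quotient by L'H\^opital's rule. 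Your route avoids the square-root manipulation and the explicit L'H\^opital computation at the cost of quoting a continuous-time Gr\"onwall-type lemma that the paper effectively proves inline; both yield the conclusion under exactly the stated hypotheses.
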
 
\begin{proof}
According to Lemma \ref{lemma1h}(ii), $\|x_{\alpha(t)}- x^\dagger \| \to 0,$ as $t \to \infty,$ we only need to prove that $\lim\limits_{t\to\infty} \| x(t)-x_{\alpha(t)} \|=0$.  Denote
$$
V(t):=\frac 1 2 \left\langle A x(t)-Ax_{\alpha(t)},x(t)-x_{\alpha(t)} \right\rangle.  
$$
Since $ V(t)\geq \frac 1 2 \lambda \|x(t)-x_{\alpha(t)} \|^2 $,  it remains to show that $V(t) \to 0 $ as  as $t \to \infty.$ We have
\begin{align}\label{rf1}
    \dot V(t) &= \frac 1 2 \left\langle A x(t)-Ax_{\alpha(t)}, \dot x(t) +\dfrac{d}{dt} x_{\alpha(t)} \right\rangle  +\frac 1 2  \left\langle \dfrac{d}{dt} A x(t)+\dfrac{d}{dt}Ax_{\alpha(t)} , x(t)-x_{\alpha(t)}  \right\rangle\notag\\  
    &= \left\langle A x(t)-Ax_{\alpha(t)}, \dot x(t) +\dfrac{d}{dt} x_{\alpha(t)} \right\rangle.
\end{align}
In the last equality, we use the assumption that $A$ is a linear, self-adjoint operator.
By Lemma \ref{lemma1h}-(iii), the regularized solution $x_{\alpha(t)} \in {\rm Sol}(A, F_{\alpha(t)}, C)$ satisfies the relation 
$$\|x_{\alpha(t)} - x_{\alpha(s)}\| \leq M \left|\frac{\alpha(t)- \alpha(s)}{\alpha(t)}\right|, \; \forall t, s \in [0, +\infty), $$
hence, it is absolutely continuous due to the absolute continuity of $\alpha(t).$ Therefore, it is differentiable almost every where in $t.$ Obviously, 
\begin{equation}\label{1Th3}    
			\left\|  \frac  d {dt} x_{\alpha(t)}  \right\| \leq M   \left|  \frac {\dot \alpha(t)}{\alpha(t)}  \right|.
		\end{equation}  
Denote $y(t):=P\Big(F_{\alpha(t)} x(t) -\mu (t) Ax(t)  \Big)$. Using the property of the projection, we have
$$
\left\langle  u-y(t),F_{\alpha(t)} x(t)-\mu(t) A x(t) -y(t) \right\rangle   \leq 0 \quad \forall u \in C.
$$
Taking $u=F_{\alpha(t)} x _{\alpha(t)}   $, we obtain
$$
\left\langle  y(t)-F_{\alpha(t)} x _{\alpha(t)} ,F_{\alpha(t)} x(t)-\mu(t) A x(t) -y(t) \right\rangle \geq 0.
$$
On the other hand, since $x_{\alpha(t)} $ solves the problem GVI$(A,F _{\alpha(t)},C )$, it holds that
$$
\mu(t) \left\langle A x _{\alpha(t)} ,y(t)-F _{\alpha(t)} x _{\alpha(t)}  \right\rangle  \geq 0.
$$
Combining the last two inequalities, we arrive at
$$
\left\langle y(t)-F _{\alpha(t)} x _{\alpha(t)},F _{\alpha(t)} x-\mu(t) [A x(t)- A x _{\alpha(t)} ]-y(t)  \right\rangle  \geq 0.
$$
Thus,
\begin{align}\label{re3}
    &\mu(t) \left\langle A x(t) -A x _{\alpha(t)} ,y(t) -F _{\alpha(t)} x(t) \right\rangle \leq \left\langle  F _{\alpha(t)} x(t)-y(t),F _{\alpha(t)} x(t) -F _{\alpha(t)} x _{\alpha(t)}  \right\rangle+ \notag\\
    & -\mu(t) \left\langle A x(t) -A x _{\alpha(t)} , F _{\alpha(t)} x(t) - F _{\alpha(t)}  x _{\alpha(t)}  \right\rangle - \|y(t) -F _{\alpha(t)} x(t)   \| ^2 \notag\\
    & \leq -\mu(t) \alpha(t) \lambda \| x(t)-x _{\alpha(t)}   \| ^2 -\frac 1 2 \|y(t) -F _{\alpha(t)} x(t)   \| ^2+ \frac 1 2 \|F _{\alpha(t)} x(t) -F _{\alpha(t)} x _{\alpha(t)}    \| ^2   \notag\\
    &\leq - \left( \mu(t) \alpha(t) \lambda -\frac 1 2 (L_F+\alpha(t))^2 \right) \|  x(t)-x _{\alpha(t)} \| ^2 - \frac 1 2 \|y(t) -F _{\alpha(t)} x(t)   \| ^2.
\end{align}
Combining \eqref{rf1} and \eqref{re3}, we have
\begin{align}\label{re4}
V(t)&\leq     \left\langle A x(t) -A x _{\alpha(t)} ,y(t) -F _{\alpha(t)} x(t) \right\rangle + ML_A\left| \frac{\dot \alpha(t)}{\alpha(t)}\right|  \| x(t)-x _{\alpha(t)}   \| \notag\\
&\leq - \left(  \alpha(t) \lambda -\frac 1 {2\mu(t)} (L_F+\alpha(t))^2 \right) \|  x(t)-x _{\alpha(t)} \| ^2 - \frac 1 {2\mu(t)} \|y(t) -F _{\alpha(t)} x(t)   \| ^2+\notag\\
&+ML_A\left| \frac{\dot \alpha(t)}{\alpha(t)}\right|  \| x(t)-x _{\alpha(t)}   \|.
\end{align}
Since $\alpha (t) \mu (t) \to +\infty$ as $t\to \infty$, without loss of generality, we may assume that $\alpha(t) \lambda -\frac 1 {2\mu(t)} (L_F+\alpha(t))^2 >0$ for all $t\geq 0.$ Thus,
using the Lipschitz continuity and the strong monotonicity of $A$, from \eqref{re4} we get
\begin{align}\label{ret2}
    \dot V(t) \leq - \left(  \frac {2\alpha(t) \lambda} {L_A} -\frac {(L_F+\alpha(t))^2} {L_A\mu(t)}  \right) V(t)  + ML_A\left| \frac{\dot \alpha(t)}{\alpha(t)}\right|\sqrt{\frac{2V}{\lambda}}.
\end{align}
From this differential inequality, we obtain
$$
\sqrt{V(t)} \leq \frac{\bigints_0^t \left\{ \exp \left\{ \bigintss_0^u  \left(  \frac {\alpha(s) \lambda} {L_A} -\frac {(L_F+\alpha(s))^2} {2L_A\mu(s)}  \right) ds  \right\} ML_A\left| \frac{\dot \alpha(u)}{\alpha(u)}\right|\sqrt{\frac{1}{2\lambda}} \right\}du +\sqrt{V(0)}  }{  \exp \left\{ \bigints_0^t  \left(  \frac {\alpha(u) \lambda} {L_A} -\frac {(L_F+\alpha(u))^2} {2L_A\mu(u)}  \right) du  \right\} }.
$$
Since $\alpha(t) \mu(t) \to \infty$ as $t\to \infty$, there exists $t_0>0$ such that
$$
 \frac {\alpha(u) \lambda} {L_A} -\frac {(L_F+\alpha(u))^2} {2L_A\mu(u)}   \geq  \frac {\alpha(u) \lambda} {2L_A} \quad \forall u\geq t_0. 
$$
Moreover, we have $\int_0^{\infty} \alpha (t) dt =\infty$, hence 
$$
\lim_{t\to \infty}  \exp \left\{ \bigintss_0^t  \left(  \frac {\alpha(u) \lambda} {L_A} -\frac {(L_F+\alpha(u))^2} {2L_A\mu(u)}  \right) du  \right\} =\infty.
$$
Now, it is sufficient to show that 
$$
L:=\lim_{t\to \infty} \frac{\bigints_0^t \left\{ \exp \left\{ \bigintss_0^u  \left(  \frac {\alpha(s) \lambda} {L_A} -\frac {(L_F+\alpha(s))^2} {2L_A\mu(s)}  \right) ds  \right\} ML_A\left| \frac{\dot \alpha(u)}{\alpha(u)}\right|\sqrt{\frac{1}{2\lambda}} \right\}du}{\exp \left\{ \bigints_0^t  \left(  \frac {\alpha(u) \lambda} {L_A} -\frac {(L_F+\alpha(u))^2} {2L_A\mu(u)}  \right) du  \right\}}=0.
$$
If the numerator is finite, we get the desired result. In the opposite case, applying L'hospital's rule, we have
\begin{align*}
    L&=\lim_{t\to \infty} \frac{  \exp \left\{ \bigints_0^t  \left(  \frac {\alpha(s) \lambda} {L_A} -\frac {(L_F+\alpha(s))^2} {2L_A\mu(s)}  \right) ds  \right\} ML_A\left| \frac{\dot \alpha(t)}{\alpha(t)}\right|\sqrt{\frac{1}{2\lambda}} }{\exp \left\{ \bigints_0^t  \left(  \frac {\alpha(u) \lambda} {L_A} -\frac {(L_F+\alpha(u))^2} {2L_A\mu(u)}  \right) du  \right\} \left(  \frac {\alpha(t) \lambda} {L_A} -\frac {(L_F+\alpha(t))^2} {2L_A\mu(t)}  \right) }\\
    &=\lim_{t\to \infty} \frac{ML_A^2\dot \alpha(t)}{\lambda \sqrt{2\lambda}\alpha^2(t)}\\
    &=0
\end{align*}
and this completes the proof.  $\blacksquare$
\end{proof}
\begin{remark}
In Theorem \ref{Theorem_continuous_convergence}, one can chose $\alpha(t) = (1+t)^{-p}$ and $\mu(t) = (1+t)^{q},$  where $0<p< q< 1.$

\end{remark}
Next, we assume that $(A,F)$ is $\gamma$-strongly monotone. In this case, we do not need to regularize the mapping $F$ and  \eqref{RegNN}  becomes
\begin{equation}\label{RegNN2}
\begin{cases}
\dot x(t)=P(Fx(t)-\mu A x(t) )-Fx(t),\\
x(0)\in H.
\end{cases}
\end{equation}
We will establish the convergence rate of the solution $x(t)$ of \eqref{RegNN2} to the unique solution $x^\dagger$ of \ref{GVI}. 
\begin{theorem}\label{Theo_rate}
    Let $x(t)$ be a strong global solution of \eqref{RegNN2}.
    Suppose that $A,F$ satisfy all the conditions in Theorem \ref{Theorem_continuous_convergence}, moreover,  $(A,F)$ is $\gamma$-strongly monotone and $\mu > \frac 1 {2\gamma} L_F^2$. Then, we have
    $$
    \|x(t) -x^\dagger   \|  \leq \sqrt{\frac{L_A}{\lambda}}  \| x(0)-x^\dagger  \| e^{-ct}, 
    $$
    where $c=\gamma -\frac 1 {2\mu} L_F^2>0$.
\end{theorem}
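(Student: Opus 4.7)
The plan is to adapt the Lyapunov-function analysis used in Theorem~\ref{Theorem_continuous_convergence}, but since $(A,F)$ is now globally $\gamma$-strongly monotone no regularization is needed and the ``moving target'' $x_{\alpha(t)}$ of that proof is replaced by the fixed solution $x^\dagger$. I would set
$$V(t) := \frac{1}{2}\langle Ax(t) - Ax^\dagger,\, x(t) - x^\dagger\rangle,$$
and invoke the $\lambda$-strong monotonicity and $L_A$-Lipschitz continuity of $A$ to sandwich
$$\frac{\lambda}{2}\|x(t) - x^\dagger\|^2 \;\leq\; V(t) \;\leq\; \frac{L_A}{2}\|x(t) - x^\dagger\|^2,$$
so it suffices to show that $V$ decays exponentially. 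Because $A$ is linear and self-adjoint and $x^\dagger$ does not depend on $t$, differentiation simplifies (in contrast with \eqref{rf1}) to
$$\dot V(t) = \langle Ax(t) - Ax^\dagger,\, \dot x(t)\rangle = \langle Ax(t) - Ax^\dagger,\, y(t) - Fx(t)\rangle,$$
where $y(t) := P(Fx(t) - \mu Ax(t))$, so that $\dot x(t) = y(t) - Fx(t)$ by \eqref{RegNN2}.

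The heart of the argument is to bound this right-hand side by a negative multiple of $\|x(t) - x^\dagger\|^2$. I would imitate the derivation of \eqref{re3} with $\alpha \equiv 0$: combine the projection characterization of $y(t)$ at $u = Fx^\dagger \in C$ with the GVI relation $\mu \langle Ax^\dagger,\, y(t) - Fx^\dagger\rangle \geq 0$ to obtain
$$\mu \langle Ax(t) - Ax^\dagger,\, y(t) - Fx(t)\rangle \leq \langle Fx(t) - y(t),\, Fx(t) - Fx^\dagger\rangle - \mu \langle Ax(t) - Ax^\dagger,\, Fx(t) - Fx^\dagger\rangle - \|y(t) - Fx(t)\|^2.$$
Now the $\gamma$-strong monotonicity of $(A,F)$ controls the middle term by $-\mu\gamma\|x(t) - x^\dagger\|^2$, and Young's inequality together with the $L_F$-Lipschitz continuity of $F$ absorbs the cross term via $\langle Fx(t) - y(t),\, Fx(t) - Fx^\dagger\rangle \leq \frac{1}{2}\|y(t) - Fx(t)\|^2 + \frac{L_F^2}{2}\|x(t) - x^\dagger\|^2$. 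After dividing by $\mu$, the two $\|y(t) - Fx(t)\|^2$ contributions combine with the correct sign and one arrives at
$$\dot V(t) \leq -\Bigl(\gamma - \frac{L_F^2}{2\mu}\Bigr)\|x(t) - x^\dagger\|^2 - \frac{1}{2\mu}\|y(t) - Fx(t)\|^2 \leq -c\,\|x(t) - x^\dagger\|^2,$$
with $c = \gamma - \frac{L_F^2}{2\mu} > 0$ by the hypothesis $\mu > L_F^2/(2\gamma)$.

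Once this dissipation inequality is in hand the finish is routine: substituting $\|x(t) - x^\dagger\|^2 \geq \frac{2}{L_A}V(t)$ converts it into a purely linear ODE inequality for $V$, and a scalar Gr\"onwall argument yields exponential decay of $V(t)$. Passing back through the lower bound $V(t) \geq \frac{\lambda}{2}\|x(t) - x^\dagger\|^2$ and the initial upper bound $V(0) \leq \frac{L_A}{2}\|x(0) - x^\dagger\|^2$ then delivers the advertised estimate for $\|x(t) - x^\dagger\|$. The main obstacle is the projection-manipulation step: aligning the signs when merging the projection inequality with the GVI relation at $x^\dagger$, and choosing the Young splitting so that the coefficient in front of $\|x(t) - x^\dagger\|^2$ compresses to exactly $c = \gamma - L_F^2/(2\mu)$. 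Everything after that step is a standard Gr\"onwall calculation and a two-sided comparison of $V$ with $\|\cdot\|^2$.
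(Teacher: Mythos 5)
Your proposal follows essentially the same route as the paper: the same Lyapunov function $W(t)=\tfrac12\langle Ax(t)-Ax^\dagger,\,x(t)-x^\dagger\rangle$, the same projection/GVI manipulation giving the analogue of \eqref{re3} with $\alpha\equiv 0$ (this is exactly \eqref{re3a}), and the same Gr\"onwall finish via the two-sided comparison $\lambda\|x-x^\dagger\|^2\le 2W\le L_A\|x-x^\dagger\|^2$. The one caveat — shared with, and in fact glossed over more heavily by, the paper's own proof, which silently replaces $-c\|x(t)-x^\dagger\|^2$ by $-2cW(t)$ in passing from \eqref{re3a} to \eqref{re4a} — is that your (correct) substitution $\|x(t)-x^\dagger\|^2\ge \frac{2}{L_A}V(t)$ actually yields the decay $e^{-ct/L_A}$ rather than $e^{-ct}$, so the exponent as advertised only comes out when $L_A\le 1$.
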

\begin{proof} 
Denote 
$$
W(t)=\frac 1 2 \left\langle Ax(t)-Ax^\dagger,x(t)-x^\dagger \right\rangle   , y(t)=P(Fx(t)-\mu Ax(t)). 
$$
Similarly to \eqref{re3}, we obtain
\begin{align}\label{re3a}
     \left\langle A x(t) -A x ^\dagger ,y(t) -F x(t) \right\rangle\leq - \left(  \gamma -\frac 1 {2\mu} L_F^2 \right) \|  x(t)-x^\dagger \| ^2 - \frac 1 {2\mu} \|y(t) -F x(t)   \| ^2.
\end{align}
Thus, instead of \eqref{re4}, we obtain
\begin{align}\label{re4a}
\dot W(t)&\leq- \left(  2 \gamma -\frac 1 {\mu} L_F^2 \right) W(t) - \frac 1 {2\mu} \|y(t) -F  x(t)   \| ^2.
\end{align}
It implies that
$$
W(t)\leq W(0) e^{-\left( 2 \gamma -\frac 1 {\mu} L_F^2\right)t}.
$$
Since $A$ is $\lambda$-strongly monotone and $L_A$-Lipschitz continuous, we have
$$
  \lambda \| x(t)-x^\dagger  \| ^2  \leq \left\langle Ax(t)-A x^\dagger,x(t)-x^\dagger \right\rangle  \leq L_A \| x(t)-x^\dagger  \| ^2 
$$
and hence
$$
\| x(t)-x^\dagger  \| \leq \sqrt{\frac{L_A}{\lambda}}  \| x(0)-x^\dagger  \| e^{-ct},
$$
where $c:= \gamma -\frac 1 {2\mu} L_F^2 >0$. \; $\blacksquare$
\end{proof}
\section{Discretization of the neural network}
An explicit Euler scheme applied to the Cauchy problem \eqref{RegNN} gives
\begin{equation}\label{Discret}
		\begin{cases}
			& y^{k}=P \Big(F_{\alpha_k} x^k -\mu_k Ax^k  \Big)\\
			&x^{k+1} = x^k +h_k (y^k-F_{\alpha_k}x^k  )\\
			& x^0 \in H.
		\end{cases}
	\end{equation}
\begin{theorem}\label{theo2discret}
   Suppose that $A,F$ satisfy all the conditions in Theorem \ref{Theorem_continuous_convergence} and 
   $$
   \sum_{k=0}^\infty h_k\alpha_k =\infty;  \lim_{k\to\infty} \mu_k h_k =0;  \lim_{k\to\infty} \mu_k \alpha_k =\infty; \lim_{k\to\infty} \frac{|\alpha_k -\alpha_{k+1} |}{h_k \alpha_k^2} =0; \lim_{k\to\infty} \alpha_k=0.
   $$
Then the sequence $\{x^k\},$ generated by \eqref{Discret} converges to the  solution $x^\dagger$ of the GVI($A, F, C$).
\end{theorem}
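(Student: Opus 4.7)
The plan is to mimic the continuous argument of Theorem \ref{Theorem_continuous_convergence} at the discrete level, with the Xu-type recursion (Lemma \ref{lemma31}) playing the role of Gr\"onwall's inequality. I split
$$\|x^k - x^\dagger\| \leq \|x^k - x_{\alpha_k}\| + \|x_{\alpha_k} - x^\dagger\|,$$
and note that the second term tends to zero by Lemma \ref{lemma1h}(ii) together with $\alpha_k\to 0$. It therefore suffices to show $x^k - x_{\alpha_k}\to 0$, for which I introduce the discrete Lyapunov sequence
$$V_k := \tfrac{1}{2}\langle A(x^k - x_{\alpha_k}),\, x^k - x_{\alpha_k}\rangle,$$
so that $\tfrac{\lambda}{2}\|x^k - x_{\alpha_k}\|^2 \leq V_k \leq \tfrac{L_A}{2}\|x^k - x_{\alpha_k}\|^2$ by the hypotheses on $A$.

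Using linearity and self-adjointness of $A$, I would expand $V_{k+1}$ by inserting $x_{\alpha_k}$ as an intermediate point and writing $x^{k+1} - x_{\alpha_k} = (x^k - x_{\alpha_k}) + h_k(y^k - F_{\alpha_k} x^k)$, obtaining
\begin{align*}
V_{k+1} - V_k &= h_k\langle A(x^k - x_{\alpha_k}),\, y^k - F_{\alpha_k} x^k\rangle + \tfrac{h_k^2}{2}\langle A(y^k - F_{\alpha_k} x^k),\, y^k - F_{\alpha_k} x^k\rangle + R_k,
\end{align*}
where $R_k$ collects the cross and quadratic terms carrying the drift $x_{\alpha_k} - x_{\alpha_{k+1}}$; by Lemma \ref{lemma1h}(iii) and the $L_A$-Lipschitz property of $A$, $R_k$ is bounded by a constant multiple of $(|\alpha_k - \alpha_{k+1}|/\alpha_k)\bigl(\|x^k - x_{\alpha_k}\| + |\alpha_k - \alpha_{k+1}|/\alpha_k\bigr)$. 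The key discrete analogue of \eqref{re3} is obtained exactly as in the continuous proof, by combining the projection characterization of $y^k = P(F_{\alpha_k}x^k - \mu_k A x^k)$ at the feasible test point $F_{\alpha_k} x_{\alpha_k}\in C$, the GVI$(A, F_{\alpha_k}, C)$ characterization of $x_{\alpha_k}$ tested at $y^k\in C$, and the $\alpha_k\lambda$-strong monotonicity of $(A, F_{\alpha_k})$, yielding
$$\mu_k\langle Ax^k - Ax_{\alpha_k},\, y^k - F_{\alpha_k} x^k\rangle \leq -\bigl(\mu_k\alpha_k\lambda - \tfrac{1}{2}(L_F+\alpha_k)^2\bigr)\|x^k - x_{\alpha_k}\|^2 - \tfrac{1}{2}\|y^k - F_{\alpha_k} x^k\|^2.$$

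Dominating the Euler quadratic term by $\tfrac{L_A h_k^2}{2}\|y^k - F_{\alpha_k} x^k\|^2$ and using $\mu_k h_k\to 0$, I choose $k$ large enough that $\tfrac{h_k}{2\mu_k} > \tfrac{L_A h_k^2}{2}$, absorbing the $\|y^k - F_{\alpha_k} x^k\|^2$ contribution into the negative term. The resulting recursion has the Xu form
$$V_{k+1} \leq (1 - \theta_k)\,V_k + \zeta_k,$$
with $\theta_k$ of order $h_k\alpha_k\lambda/L_A$ (so $\sum_k\theta_k = \infty$ by the hypothesis $\sum_k h_k\alpha_k = \infty$), and $\zeta_k$ the sum of the $R_k$-contribution of order $|\alpha_k - \alpha_{k+1}|/\alpha_k$ and an error of order $h_k(L_F+\alpha_k)^2/\mu_k$. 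The standing assumptions $\mu_k\alpha_k\to\infty$, $\alpha_k\to 0$ and $|\alpha_k-\alpha_{k+1}|/(h_k\alpha_k^2)\to 0$ together force $\zeta_k/\theta_k\to 0$, so Lemma \ref{lemma31} delivers $V_k\to 0$, hence $x^k\to x^\dagger$.

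The main obstacle I anticipate is that the sign of $\mu_k\alpha_k\lambda - \tfrac{1}{2}(L_F+\alpha_k)^2$ is only guaranteed for sufficiently large $k$, and the Euler quadratic term can be absorbed only if $\|y^k - F_{\alpha_k} x^k\|$ is known in advance to be uniformly bounded. The cleanest remedy is to prove first, by a standard induction using the same recursion (which, for large $k$, has the structure $V_{k+1}\leq V_k + C\zeta_k$ uniformly in any finite starting value $V_{k_0}$), that $\{V_k\}$ and consequently $\{x^k\}$ are bounded; nonexpansiveness of $P$ together with Lipschitz continuity of $A$ and $F$ then yields uniform control of $\|y^k - F_{\alpha_k} x^k\|$, after which the contraction-plus-perturbation estimate and Lemma \ref{lemma31} close the argument.
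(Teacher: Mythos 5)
Your proposal follows essentially the same route as the paper's proof: the same quadratic Lyapunov sequence $\langle A(x^k-x_{\alpha_k}),\,x^k-x_{\alpha_k}\rangle$, the same discrete analogue of \eqref{re3} obtained from the projection characterization of $y^k$ and the GVI$(A,F_{\alpha_k},C)$ characterization of $x_{\alpha_k}$, the same treatment of the drift $x_{\alpha_{k+1}}-x_{\alpha_k}$ via Lemma 3.2(iii) and Young's inequality, and the same appeal to Lemma \ref{lemma31}. The boundedness worry you raise at the end is unnecessary: the Euler quadratic term $L_A h_k^2\|y^k-F_{\alpha_k}x^k\|^2$ is absorbed for large $k$ by the negative term $-\tfrac{h_k}{\mu_k}\|y^k-F_{\alpha_k}x^k\|^2$ coming from the key inequality (since $\mu_k h_k\to 0$ gives $\tfrac{h_k}{\mu_k}-2L_Ah_k^2\geq 0$ eventually), both being quadratic in the same quantity, so no a priori bound on $\|y^k-F_{\alpha_k}x^k\|$ or on $\{x^k\}$ is required.
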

\begin{proof}
Similarly to \eqref{re3}, we get
\begin{equation}\label{fe1}
         \left\langle A x^k -A x _{\alpha_k} ,y^k -F _{\alpha_k} x^k \right\rangle    \leq - \left(  \alpha_k \lambda -\frac 1 {2\mu_k} (L_F+\alpha_k)^2 \right) \|  x^k-x _{\alpha_k} \| ^2 - \frac 1 {2\mu_k} \|y^k -F _{\alpha_k} x^k   \| ^2.
\end{equation}
Denote $U_{k}:=\left\langle Ax^{k}-A x_{\alpha_{k}},x^k-x_{\alpha_k}\right\rangle$.   
We have
\begin{align}\label{aq1}
    U_{k+1}-U_k&=\left\langle A x^{k+1}-Ax _{\alpha_{k+1} }  , x^{k+1}-x^k -(x _{\alpha_{k+1}}-x _{\alpha_{k}}  )  \right\rangle+  \notag \\
    &+ \left\langle  A x^{k+1}-A x^k-(Ax _{\alpha_{k+1} }  -A x _{\alpha_{k} }  ),x^k -x _{\alpha_{k}}   \right\rangle.
\end{align}
Since $A$ is a linear, self-adjoint  and Lipschitz continuous operator, it holds that
\begin{align}\label{aq2}
  & \left\langle A x^{k+1}-Ax _{\alpha_{k+1} }  , x^{k+1}-x^k -(x _{\alpha_{k+1}}-x _{\alpha_{k}}  )  \right\rangle \leq  \left\langle A x^k -A x _{\alpha_k} ,x^{k+1}-x^k  \right\rangle +\notag \\ 
  & + L_A \|  x^{k+1}-x _{\alpha_{k+1} }   \| \| x _{\alpha_{k+1}}-x _{\alpha_{k}}   \| +  L_A \| x^{k+1}-x^k  \|^2 +L_A \| x^{k+1}-x^k  \| \|  x _{\alpha_{k+1}}-x _{\alpha_{k}} \| \notag \\ 
  &\leq \left\langle A x^k -A x _{\alpha_k} ,x^{k+1}-x^k  \right\rangle + 2L_A \| x^{k+1}-x^k  \| \|x _{\alpha_{k+1}}-x _{\alpha_{k}}     \|+ L_A \| x _{\alpha_{k+1}}-x _{\alpha_{k}}   \| ^2 + \notag \\ 
  &+L_A \| x^k -x _{\alpha_{k}}   \|   \| x _{\alpha_{k+1}} -x _{\alpha_{k}}   \| + L_A \|  x^{k+1}-x^k \| ^2. 
\end{align}
and
\begin{align}\label{aq3}
   \left\langle  A x^{k+1}-A x^k-(Ax _{\alpha_{k+1} }  -A x _{\alpha_{k} }  ),x^k -x _{\alpha_{k}}   \right\rangle & \leq  \left\langle A x^k -A x _{\alpha_k} ,x^{k+1}-x^k  \right\rangle + \notag \\ 
   &+L_A \|  x _{\alpha_{k+1}}-x _{\alpha_{k}} \| \| x^k-x _{\alpha_{k}}   \| .
\end{align}
Combining \eqref{fe1}, \eqref{aq1}, \eqref{aq2} and \eqref{aq3}, we get
\begin{align}\label{aq4}
  &  U_{k+1}-U_k \leq 2\left\langle A x^k -A x _{\alpha_k} ,x^{k+1}-x^k  \right\rangle + 2L_A \| x^{k+1}-x^k  \| \|x _{\alpha_{k+1}}-x _{\alpha_{k}}     \| + \notag \\ 
  &+2L_A \| x^k -x _{\alpha_{k}}   \|   \| x _{\alpha_{k+1}} -x _{\alpha_{k}}   \| + L_A \|  x^{k+1}-x^k \| ^2+ L_A \| x _{\alpha_{k+1}}-x _{\alpha_{k}}   \| ^2 \notag \\
  & \leq - \left( 2h_k \alpha_k \lambda -\frac {h_k} {\mu_k} (L_F+\alpha_k)^2 \right) \|  x^k-x _{\alpha_k} \| ^2 - \frac {h_k} {\mu_k} \|y^k -F _{\alpha_k} x^k   \| ^2 +  \notag \\ 
  &+2L_A \| x^k -x _{\alpha_{k}}   \|   \| x _{\alpha_{k+1}} -x _{\alpha_{k}}   \| + L_A \|  x^{k+1}-x^k \| ^2+ L_A \| x _{\alpha_{k+1}}-x _{\alpha_{k}}   \| ^2+ \notag \\
  &+2L_A \| x^{k+1}-x^k  \| \|x _{\alpha_{k+1}}-x _{\alpha_{k}}     \|. 
\end{align}
Since $\mu_k \alpha_k \to \infty$ as $k\to \infty$, without loss of generality, we may assume that 
$$
2h_k \alpha_k \lambda -\frac {h_k} {\mu_k} (L_F+\alpha_k)^2 \geq h_k \alpha_k \lambda\quad \forall k\geq 0.
$$
On the other hand, applying the Cauchy inequality, we get
$$
2L_A \| x^k -x _{\alpha_{k}}   \|   \| x _{\alpha_{k+1}} -x _{\alpha_{k}}   \| \leq  \frac 1 2  h_k \alpha_k \lambda \| x^k -x _{\alpha_{k}}   \| ^2 + \frac {2L_A^2} {h_k \alpha_k \lambda}\| x _{\alpha_{k+1}} -x _{\alpha_{k}}   \|^2.
$$
and 
$$
2L_A \| x^{k+1}-x^k  \| \|x _{\alpha_{k+1}}-x _{\alpha_{k}}     \| \leq L_A \| x^{k+1}-x^k  \| ^2 + L_A \|x _{\alpha_{k+1}}-x _{\alpha_{k}}     \|^2
$$
Thus,
\begin{align}\label{aq5}
    U_{k+1}-U_k  & \leq -\frac 1 2  h_k \alpha_k \lambda  \|  x^k-x _{\alpha_k} \| ^2 - \left( \frac {h_k} {\mu_k} -2L_Ah_k^2 \right) \|y^k -F _{\alpha_k} x^k   \| ^2 +  \notag \\ 
  &+   \left( 2L_A +\frac {2L_A^2} {h_k \alpha_k \lambda} \right) \| x _{\alpha_{k+1}}-x _{\alpha_{k}}   \| ^2 \notag \\
  & \leq -\frac 1 2  h_k \alpha_k \lambda  \|  x^k-x _{\alpha_k} \| ^2 - \left( \frac {h_k} {\mu_k} -2L_Ah_k^2 \right) \|y^k -F _{\alpha_k} x^k   \| ^2 +  \notag \\ 
  &+   M^2 \left( 2L_A +\frac {2L_A^2} {h_k \alpha_k \lambda} \right)\frac {| \alpha_{k+1}-\alpha_{k}|^2}{\alpha_k^2}.
\end{align}
Since $\mu_k h_k \to 0$, without loss of generality, we may assume that $\frac {h_k} {\mu_k} -2L_Ah_k^2 \geq 0  $ for all $k\geq 0.$ On the other hand, since $A$ is $L_A$-Lipschitz continuous, we have $ U_k \leq L_A \| x^k-x_{\alpha_k} \|^2$. Combining these facts with \eqref{aq5},  we obtain
\begin{align}\label{aq6}
    U_{k+1}  & \leq \left(1-\frac {h_k \alpha_k \lambda} {2L_A}  \right)  U_k +   M^2 \left( 2L_A +\frac {2L_A^2} {h_k \alpha_k \lambda} \right)\frac {| \alpha_{k+1}-\alpha_{k}|^2}{\alpha_k^2}.
\end{align}
Since
$$
\sum_{k=0}^\infty h_k \alpha_k =\infty,\quad \lim_{k\to \infty} \frac{ | \alpha_{k+1}-\alpha_{k}|^2 }{\alpha_k^4 h_k^2}=0,
$$
applying Lemma \ref{lemma31} with 
$$
\theta_k:= \frac {h_k \alpha_k \lambda} {2L_A};\quad \zeta_k := M^2 \left( 2L_A +\frac {2L_A^2} {h_k \alpha_k \lambda} \right)\frac {| \alpha_{k+1}-\alpha_{k}|^2}{\alpha_k^2},
$$
we get $U_k\to 0$. Finally, since $A$ is strongly monotone, we have $U_k \geq \lambda \| x^k-x_{\alpha_k}\|^2$, and therefore, $\| x^k-x_{\alpha_k}\| \to 0.$ According to Lemma \ref{lemma1h}-(ii), $x_{\alpha_k} \to x^\dagger$ as $k \to\infty,$ hence $\lim\limits_{k\to \infty} x^k = x^\dagger.$
$\blacksquare$
\end{proof}
\begin{remark}\label{remark1h}
    An example of $\{\alpha_k\}$, $\{\mu_k\}$, $\{h_k\}$ satisfying the conditions in Theorem \ref{theo2discret} is
    $$
    \alpha_k=\frac 1 {(k+1)^p},\ \mu_k=(k+1)^q,\ h_k=\frac 1 {(k+1)^r},\quad   0<p<q<r, p+r<1.
    $$
\end{remark}
\section{Numerical experiments}
\begin{example}
		Let $ H = \ell^2$ and $m\geq 2$ be a fixed positive integer. For any $x = (x_1, x_2, \ldots) \in \ell^2,$ we define the operators $A x = (x_1, x_2,\frac{3+1}{3}x_3 , \ldots, \frac{n+1}{n}x_n, \ldots)$ and $Fx = (-x_2, x_1, 0, \ldots ).$ It is easy seen that $A$ is Lipschitz continuous and strongly monotone with $L_A = 2, \; \mu =1.$ Further, 
$F$ is a Lipschitz continuous operator with $L_F = 1.$ Let $\ell^2 \ni x^n =(x_1^n, x_2^n, \ldots) \rightharpoonup x=(x_1, x_2, \ldots) $ and $u = (1,0, \ldots), \; v =(0, -1, 0, \ldots).$ Then $x_n^1 = \langle x^n, u \rangle  \to \langle x, u \rangle = x_1, \;\; -x_2^n = \langle x^n,v \rangle \to \langle x, v\rangle = -x_2 \; (n \to \infty). $ The fact $F(x^n)= (-x_2^n, x_1^n, 0, \ldots) \to (-x_2, x_1, 0, \ldots) = F(x)$ as $n \to \infty,$ shows that $F$ is sequentially weak-to-strong continuous, hence it is weak-to-weak continuous. Finally, for all $x=(x_1, x_2, \ldots, x_n, \ldots)$ and $y=(y_1, y_2, \ldots, y_n, \ldots),$  one has 
$\langle Ax - Ay, Fx -Fy\rangle = 0,$ i.e., the couple ($A, F$) is monotone.\\
Define a set $C= \{x \in \ell^2: x_i = 0, \; i=1, \ldots, m\}$.\\ 
		Observe that $Fx \in C$ if and only if $x = (0, 0, x_3,\ldots).$ For such $x$ and for any $y \in C,$ we have $F x =0 ,$ and
		$$T(x, y):= \langle Ax, y - Fx\rangle = \langle Ax, y \rangle=   \sum_{j= m+1}^\infty \frac{j+1}{j}x_jy_j. $$
		We show that $T(x, y) \geq 0$ for all $y\in C$ if and only if $x_j = 0, \; j \geq m+1.$ \\
		Indeed, if the latter condition holds then $T(x, y) =0$ trivially for all $y \in C.$\\ Conversely,  suppose that there exists $\tilde{x}=(0,0,\tilde{x}_3, \ldots, \tilde{x}_j, \ldots)$ 
		such that $\tilde{x}_j \neq 0$ for some $ j \geq m+1,$ while  $T(\tilde{x},y) \geq 0$ for all $y \in C.$ 
		Taking $\tilde{y} \in C$ with $\tilde{y}_j := - \tilde x_j $ and  $\tilde{y}_k := 0$ for all $k\neq j$. Then,   $T(\tilde{x}, \tilde{y})= -(\tilde{x}_j)^2 <0,$ which is a contradiction. Hence the solution set Sol($A, F, C$) consists of all $x=(0, 0, x_3, \ldots, x_m, 0, \ldots).$ Clearly, Sol($A, F, C$) is nonempty, closed, convex subset in $\ell^2.$ According to Remark \ref{CondA3}, $A$ is weak-weak continuous, and the subset $\mathcal{C}:= A({\rm Sol}(A,F,C))$ is convex.\\
%Moreover, $\mathcal{C} = \{ (0,0, \frac{3}{4}x_3, \ldots, \frac{m}{m+1}x_m, 0, \ldots) \} $.\\
Thus, all the assumptions  $(A_1)-(A_3)$ are fulfilled.\\
Clearly, $\mathcal{A}y = (y_1, y_2, \frac{3}{4}y_3,\ldots, \frac{n}{n+1}y_n, \ldots).$ Let $u^\dagger:= (0, 0, u_3, \ldots,u_m, 0, \ldots) \in \mathcal{C}$ be a unique solution to the BVI
$$\langle \mathcal{A}u^\dagger, v- u^\dagger \rangle \geq 0, \; \forall v \in \mathcal{C}.$$
Letting $v =0 \in \mathcal{C},$ we get
$$-\langle \mathcal{A}u^\dagger, u^\dagger \rangle = -\frac{3}{4}u_3^2-\ldots-\frac{m}{m+1}u_m^2 \geq 0. $$
Thus $u^\dagger = 0,$ hence $x^\dagger = \mathcal{A} u^\dagger =0.$\\
The regularized neural network \eqref{RegNN} in this case is of the form:
\begin{equation}\label{RegNN1}
			\begin{cases}
				&\dot{x}_1(t) = x_2(t) - \alpha(t) x_1(t),\\
				&\dot{x}_2(t) = -x_1(t) - \alpha(t) x_2(t),\\
				&\dot{x}_k(t)=  - \alpha(t) x_k(t), \; k= 3,\ldots,m ,\\
				&\dot{x}_j(t) = - \frac{j}{J+1}\mu(t) x_j(t), \; j \geq m+1,\\
				& x(0) = x^0 \in \ell^2, 
			\end{cases}
		\end{equation}
Integrating \eqref{RegNN1} gives
\begin{equation*}
			\begin{cases}
				&{x}_1(t) =     e^ {-\int_0^t \alpha (u)du } (x^0_1 \cos t +x^0_2 \sin t ),\\
				&{x}_2(t) =     e^ {-\int_0^t \alpha (u)du } (x^0_2 \cos t -x^0_1 \sin t ),\\
				&x_k(t)=  x^0_k e^ {-\int_0^t \alpha (u)du }, \; k= 3,\ldots,m ,\\
				&x_j(t) = x^0_j \frac{j+1}{j}e^ {-\int_0^t \mu (u)du }, \; j \geq m+1,\\
			\end{cases}
		\end{equation*}
Further, 
\begin{equation*}
			\|x(t)- x^\dagger\|^2 = \|x(t)\|^2 = \sum\limits_{i=1}^m|x_i(0)|^2 e^{-2\int_0^t \alpha(u){\rm du}}
			+ \sum\limits_{j=m+1}^\infty |x_j^0|^2 \Big(\frac{j+1}{j}\Big)^2e^{-2\int_0^t\mu(u) \mathrm{du}}
		\end{equation*} 		
The last estimate shows that
$$\|x(t) - x^\dagger\|  \leq \sum\limits_{i=1}^m|x_i(0)|^2 e^{-2\int_0^t \alpha(u){\rm du}} + 
4 \Big(\|x(0)\|^2 - \sum\limits_{i=1}^m|x_i(0)|^2 \Big)e^{-2\int_0^t \mu(u) \mathrm{du}}.$$
If $\int_0^\infty \alpha(u){\rm du} = \int_0^\infty \mu(u){\rm du} = +\infty$  then $\|x (t) - x^\dagger \| \to 0$ as $t$ approaches $\infty.$
\vskip0.1cm
\end{example}
\begin{example}\label{ex2}
Let $H = \mathbb{R}^3,\; C = B[0, 1]$- a closed unit ball centered at $0.$ Define a nonsingular matrix
$$ B=\begin{pmatrix}
1 & 0 & 0\\
1 & 1 & 0\\
1 & 1 & 1\\ 
\end{pmatrix} $$
and let $A = B^TB,$ then $A$ is a symmetric positive define matrix. A simple calculation shows that 
$$ A=\begin{pmatrix}
1 &1 &1\\
1 & 2 & 2\\
1 & 2 & 3 \\ 
\end{pmatrix},\,\,\,\,\,\, A^{-1} = \begin{pmatrix}
2 & -1 &0\\
-1 & 2 &-1\\
0 & -1 & 1\\ 
\end{pmatrix} $$
Let $Fx:=A^{-1}P x,$ where $Px$ is a metric projection of $x$ onto $C.$ Then,
\begin{equation*}
Fx = \left \{
\begin{array}{ll}
 A^{-1} x, \; \; \|x\| \leq 1, \\
\frac{A^{-1}x}{\|x\|} \; \;\; \|x\| >1
\end{array}
\right .
\end{equation*}
Since the metric projection $P$ is firmly nonexpansive and the matrix $A$ is symmetric, one gets
$$\langle Ax- Ay, Fx - Fy\rangle = \langle  x- y, Px - Py\rangle \geq \|P x -P y\|^2 \geq 0, $$ 
for all $x, y \in H,$ which yields the monotonicity of the couple ($A, F$).\\
Clearly, $x = 0$ is a solution of the GVI($A, F, C$). Suppose $x^* $ is an another solution, then $Fx^* \in C, \; \langle Ax^*, y - Fx^*\rangle \geq 0 $ for all $y \in C.$ In particular, setting $y = 0,$ we find $-\langle Ax^*, Fx^*\rangle \geq 0,$ or equivalently, $\langle x^*, Px^*\rangle \leq 0,$ which implies that $x^* =0.$ Thus Sol($A, F, C$) $= \{0\}$ and $x^\dagger = 0.$
Applying the algorithm \eqref{Discret}, we get 
\begin{equation}\label{Discret1}
		\begin{cases}
        &z^k = A^{-1}P x^k\\
			& y^{k}=P \Big(z^k + \alpha_k x^k -\mu_k Ax^k  \Big)\\
			&x^{k+1} = x^k +h_k (y^k-z^k - \alpha_k x^k  )\\
				\end{cases}
	\end{equation}
\vskip0.1cm
The parameters of the algorithms are chosen as follows:
\begin{itemize}
    \item[(i)] In Algorithm \eqref{Discret}, $\alpha_k=\frac 1 {(k+1)^p}$, $\mu_k=(k+1)^q,$ $h_k=\frac 1 {(k+1)^r}$. We test the algorithm with different $p,q,r$ satisfying $0<p<q<r, p+r<1$.  
    \item[(ii)] In the dynamical system  \eqref{RegNN}, $\alpha (t)=(t+1)^{-0.2} $, $\mu (t)=(t+1)^{0.4}.$
\end{itemize}
The starting point are randomly generated.
The results are presented in Figures \ref{fig2} and \ref{fig1}.
\begin{figure}[!ht]
\centering
\includegraphics[width=9cm]{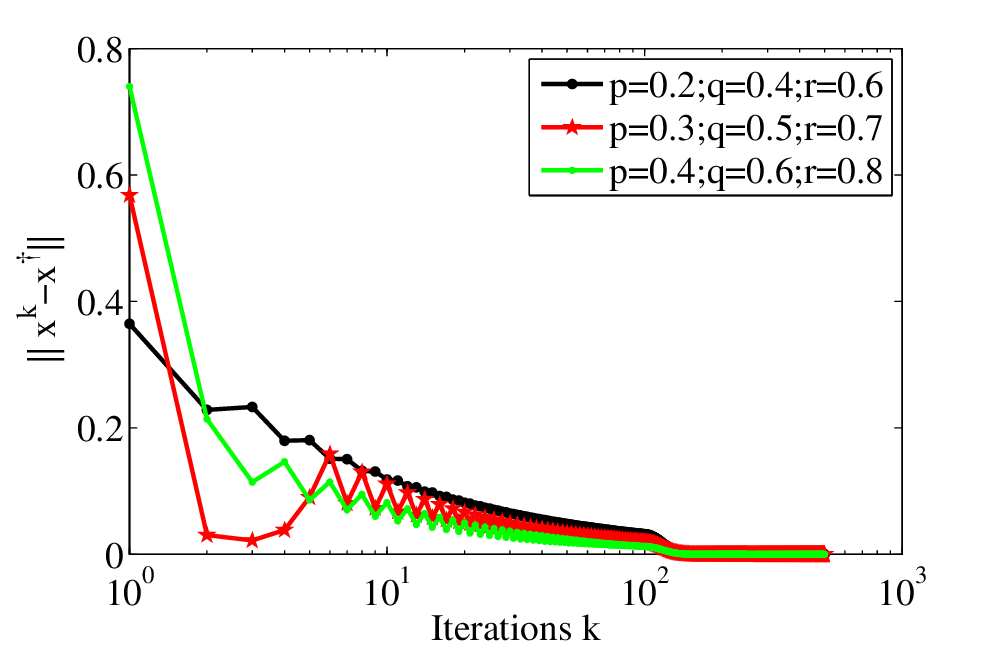}
\caption{Behaviors of the sequence $\{x^k\}$ generated by Algorithm \eqref{Discret} in Example  \ref{ex2}}\label{fig2}
\end{figure}
\begin{figure}[!ht]
\centering
\includegraphics[width=9cm]{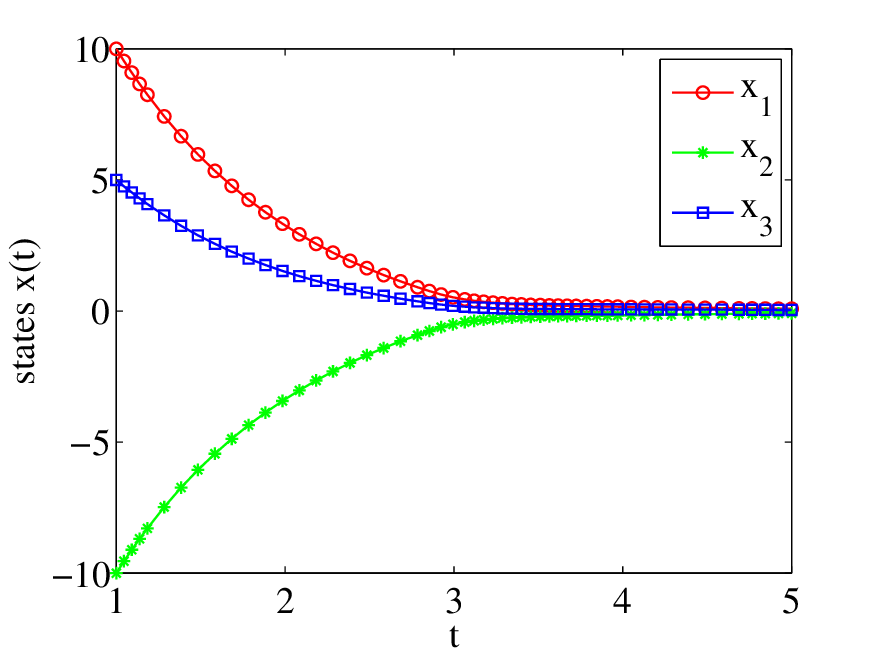}
\caption{Behaviors of the function $x(t)$ generated by Algorithm \eqref{RegNN} in Example  \ref{ex2}}\label{fig1}
\end{figure}
\end{example}
\begin{example}\label{example3}
    Let $C:=\{ x=(x_1,x_2,\ldots,x_n)^T \in \mathbb{R}^n: x_i\in [-1;1], i=1,\ldots n  \}$, $A:\mathbb{R}^n \to \mathbb{R}^n$, $A(x)=(x_1, 2x_2,\ldots,n x_n )$,  $F: \mathbb{R}^n \to \mathbb{R}^n$, $F(x)=Bx$, where $B=(2n)^{-1}(b_{ij})_{n\times n}$ and
    $$
    b_{ij}=\begin{cases}
        0\text{ if } i=j,\\
        i+j\text{ if } j>i,\\
        -j(i+j)/i\text{ if } j<i.
    \end{cases}
    $$
    It is easy seen that $A,F$ is Lipschitz continuous, $A$ is $1$-strongly monotone. Moreover, since $ \left\langle Ax,Fx \right\rangle =0  $ for all $x \in \mathbb{R}^n$, we have
    $$
    \left\langle Ax-Ay,Fx-Fy \right\rangle  =0\quad \forall x,y\in \mathbb{R}^n.
    $$
    Hence, the couple $(A,F)$ is monotone. Next, we show that Sol$(A; F ; C)=\{0\}$.
    Indeed, suppose that $x^*$ is a solution of the problem \ref{GVI}. Thus, $ \left\langle Ax^*,y-Fx^* \right\rangle \geq 0  $ for all $y\in C$. Since $ \left\langle Ax^*,Fx^* \right\rangle =0 $, hence $ \left\langle Ax^*,y\right\rangle \geq 0   $ for all $y\in C.$ If $Ax^* =0$, then $x^*=0$, otherwise, taking $y=-\frac 1 {\|Ax^*\|}Ax^* \in C$, we have $ - \left\langle Ax^*,Ax^* \right\rangle \geq 0  $, which implies $x^* =0.$ Thus, $x^* =0$ is the unique solution of the problem \ref{GVI}. We apply the dynamical system \eqref{RegNN} to solve this problem with $\alpha (t)=(t+1)^{-0.2} $, $\mu (t)=(t+1)^{0.4}.$
The starting point is randomly generated. We implement the algorithm with different $n$. The results are presented in Figure \ref{fig3}. As we can see, in all the cases, the function $x(t)$ converges to the unique solution $x^* =0$ of the problem.
\begin{figure}[!ht]
    \centering
   \includegraphics[width=\textwidth]{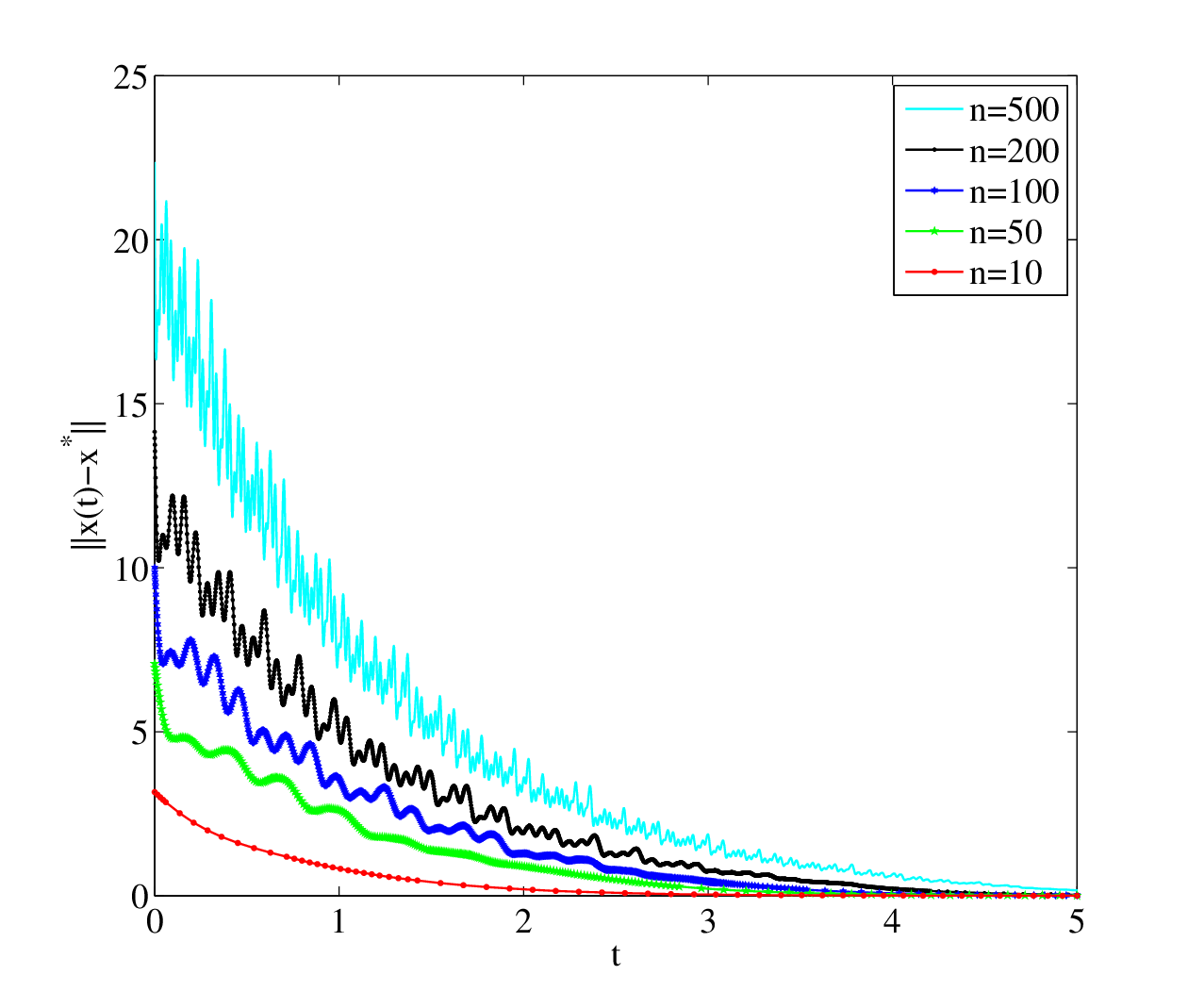}
    \caption{Behaviors of the function $x(t)$ generated by the dynamical system \eqref{RegNN} in Example \ref{example3}}\label{fig3}
\end{figure}
\end{example}
\section{Conclusions}	
In this paper, based on the regularization techniques, we introduce neural networks for solving  general variational inequalities involving monotone couples of operators. We prove that under some suitable conditions, the regularized neural network as well as its discretized version strongly converge to a certain solution of the GVI. We also give some numerical experiments, including infinite dimensional one, to certify the effectiveness of the proposed algorithm.
{\bf Open questions.} {\it Do Theorems \ref{Theorem_continuous_convergence} and \ref{theo2discret} still hold when $A$ is not necessary a bounded linear self-adjoint positive define operator ?. }
\vskip0.1cm


\begin{thebibliography}{99}
\bibitem{AlShejari} A.A. AlShejari, M. A. Noor, K. I. Noor, Recent developments in general quasi variational inequalities, Int. J. Anal. Appl. 22:84 (2024)
\bibitem{AnhHai} P.K. Anh, T.N. Hai, Regularized dynamics for monotone inverse variational inequalities in Hilbert spaces,  Optim. Eng.  25, 2295–2313 (2024)
\bibitem{Brou} L.E.J. Brouwer, Invariantz des $n$-dimensionalen Gebiets, Math. Ann.  71, 305–313 (1911)
\bibitem{Haraux} A. Haraux,  Systémes dynamiques dissipatifs et applications. Recherches en Mathématiques Appliquées, Masson (1991)
\bibitem{He1} B.S. He, Inexact implicit methods for monotone general variational inequalities, Math. Program., Ser. A 86, 199–217 (1999)
\bibitem{HeDong} S. He,  Q.-L. Dong, An existence-uniqueness theorem and alternating contraction projection methods for inverse variational inequalities, J. Inequal. Appl.  2018:351 (2018) 
\bibitem{Kinderlehrer} D. Kinderlehrer, G.  Stampacchia,  An introduction to variational inequalities and their applications, Academic, New York (1980)
\bibitem{LiZou}  X. Li, Y.Z. Zou, Existence result and error bounds for a new class of inverse mixed quasi variational inequalities, J. Inequal. Appl. 42 (2016) 
\bibitem{Noor1} M.A. Noor, General variational inequalities, Appl. Math. Lett. 1, 119-121  (1988).
\bibitem{PangYao} J.S., Pang, J.C., Yao, On a generalization of a normal map and equation, SIAM J. Control Optim. 33, 168–184 (1995)
\bibitem{Tang} J. Tangkhawiwetkul, A neural network for solving the generalized inverse mixed variational inequality problem in Hilbert spaces, AIMS Math. 8(3), 7258-7276 (2023)
\bibitem{Xu}  H. Xu,  Another control condition in an iterative method for nonexpansive mappings. Bull. Austral. Math. Soc. 65, 109–113 (2002)
\bibitem{Zhao} Y.-B. Zhao, The iterative methods for monotone generalized variational inequalities, Optimization 42(4), 285-307 (1997)
\end{thebibliography}
\end{document}